\def\@setcopyright{\@empty}
\newcommand{\prn}[1]{\left(#1\right)}
\newcommand{\brc}[1]{\left\{#1\right\}}
\newcommand{\allp}{1\le p\le\infty}
\newcommand{\Lp}{L_{p,\alpha,\beta}}
\newcommand{\Lmu}{L_{1,2,2}}
\newcommand{\norm}[1]{\left\|#1\right\|_{p,\alpha,\beta}}
\newcommand{\normpar}[2]{\left\|#1\right\|_{#2}}
\newcommand{\E}{E_n(f)_{p,\alpha,\beta}}
\newcommand{\Epar}[2]{E_{#1}\left(#2\right)_{p,\alpha,\beta}}
\newcommand{\T}[3]{T_{#1}^{#2}\left(#3\right)}
\newcommand{\hatT}[3]{\hat T_{#1}^{#2}\left(#3\right)}
\newcommand{\Si}[1]{\left(1-#1^2\right)}
\newcommand{\Dl}[3]{\Delta_{#1}^{#2}\left(#3\right)}
\newcommand{\arr}[2]{{{#1}_1,\dots,{#1}_{#2}}}
\newcommand{\w}{\hat\omega_r(f,\delta)_{p,\alpha,\beta}}
\newcommand{\Px}[1]{P_{#1}^{(2,2)}}
\newcommand{\Py}[1]{P_{#1+2}^{(0,0)}}
\newcommand{\krn}[1]{%
  \left(\frac{\sin\frac{m#1}2}{\sin\frac{#1}2}\right)^{2q+4}}
\newcommand{\numericset}[1]{\mathbb #1}
\newcommand{\numN}{\numericset N}
\newtheorem{thm}{Theorem}[subsection]
\newtheorem{lmm}{Lemma}[subsection]
\newtheorem{cor}{Corollary}[subsection]
\newcounter{const}
\numberwithin{const}{thm}
\numberwithin{const}{lmm}
\numberwithin{const}{cor}
\newcommand{\Cn}[1][]{%
  \stepcounter{const}C_{\theconst}%
  \@ifnotempty{#1}{\newcounter{#1}\setcounter{#1}{\arabic{const}}}}
\newcommand{\lastC}{C_{\theconst}}
\newcommand{\prevC}[1][1]{%
	{\countdef\n=255
	 \n=\theconst
	 \advance\n by-#1
	 C_{\number\n}}}
\numberwithin{equation}{subsection}
\renewcommand{\theconst}{\arabic{const}}
\begin{document}

\title[Approximation of classes of functions\dots]
	{Approximation of classes of functions defined by
	  a generalized $r$-th modulus of smoothness}
\author{M.~K.\ Potapov}
\address{M.~K.\ Potapov\\
	Department of Mechanics and Mathematics\\
	Moscow State University\\
	Moscow 117234\\
	Russia}
\author{F.~M.\ Berisha}
\address{F.~M.\ Berisha\\
	Faculty of Mathematics and Sciences\\
	University of Prishtina\\
	N\"ena Terez\"e~5\\
	38000 Prishtin\"e\\
	Kosova}
	\email{faton.berisha@uni-pr.edu}

\keywords{Generalised modulus of smoothness,
	asymmetric operator of generalised translation,
	coincidence of classes,
	best approximations by algebraic polynomials}
\subjclass{Primary 41A35, Secondary 41A50, 42A16.}
\date{}
\dedicatory{Dedicated to Proffesor P.~L.\ Ul'yanov
  on the occasion of his 70-th birthday}
\thanks{This work was done under the support of
  the Russian Foundation for Fundamental Scientific Research,
  Grant \#97-01-00010 and Grant \#96/97-15-96073.}

\begin{abstract}
In this paper, a $k$-th generalized modulus of smoothness
is defined based on an asymmetric operator of generalized
translation and a theorem is proved about
the coincidence of class of functions defined by this
modulus and a class of functions having given order
of best approximation by algebraic polynomials.
\end{abstract}

\maketitle

\textbf{Introduction. }%
In paper~\cite{potapov:mat-99},
an asymmetric operator of generalized translation was introduced
and by means of it
the corresponding generalized modulus of smoothness
of first order was defined.
Then a theorem was proved about coincidence
of the class of functions defined by this modulus
with the class of functions having a given order of best approximation
by algebraic polynomials.

In the present paper,
analogous results are obtained
for the generalized modulus of smoothness of order~$r$.
In addition,
the space in which the theorem of coincidence
of the corresponding classes of functions holds true is widened.

\subsection{}

For $1\le p<\infty$, as usual,
$L_p$~denotes the set all measurable functions~$f$ on~$[-1,1]$
for which
\begin{displaymath}
	\normpar f p=\prn{\int_{-1}^1|f(x)|^p\,dx}^{1/p}<\infty.
\end{displaymath}
For $p=\infty$,
$L_\infty$~is the space of all continuos functions~$f$ on~$[-1,1]$
with a norm
\begin{displaymath}
	\normpar f\infty=\max_{-1\le x\le1}|f(x)|.
\end{displaymath}

Denote by~$\Lp$ the set of functions~$f$ such that
$f(x)\*(1-x)^\alpha\*(1+x)^\beta\in L_p$,
and set
\begin{displaymath}
	\norm f=\normpar{f(x)(1-x)^\alpha(1+x)^\beta}p.
\end{displaymath}

By~$\E$ we denote the best approximation of $f\in\Lp$
by algebraic polynomials of degree not greater than~$n-1$
in~$\Lp$ metrics, that is,
\begin{displaymath}
	\E=\inf_{P_n\in\mathcal P_n}\norm{f-P_n},
\end{displaymath}
where~$\mathcal P_n$ is the set of algebraic polynomials
of degree not greater than~$n-1$.

By~$E(p,\alpha,\beta,\lambda)$ we denote the class of functions $f\in\Lp$
satisfying the condition
\begin{displaymath}
	\E\le Cn^{-\lambda},
\end{displaymath}
where $\lambda>0$ and~$C$ is a constant
not depending on~$n$ $(n\in\numN)$.

For functions~$f$ we define
the operator of \emph{generalized translation} $\hatT t{}{f,x}$
by
\begin{multline*}
	\hatT t{}{f,x}=\frac1{\pi\Si x}
	  \int_0^\pi
		\bigg(
		  1-\prn{x\cos t-\sqrt{1-x^2}\sin t\cos\varphi}^2
		  -2\sin^2t\sin^2\varphi\\
	+4\Si{x}\sin^2t\sin^4\varphi
		\bigg)
		  f(x\cos t-\sqrt{1-x^2}\sin t\cos\varphi)\,d\varphi.
\end{multline*}

By means of this operator of generalized translation
we define the \emph{generalized difference}
of order~$r$ by
\begin{align*}
	\Dl t1{f,x}
	  &=\Dl t{}{f,x}=\hatT t{}{f,x}-f(x),\\
	\Dl{\arr t r}r{f,x}
	  &=\Dl{t_r}{}{\Dl{\arr t{r-1}}{r-1}{f,x},x}
	  \quad(r=2,3,\dotsc),
\end{align*}
and the \emph{generalized modulus of smoothness}
of order~$r$ by
\begin{displaymath}
	\w=\sup_{|t_i|_{i=1,\dots,r}\le\delta}
	  \norm{\Dl{\arr t r}r{f,x}}
	\quad(r=1,2,\dotsc).
\end{displaymath}

Consider the class $H(p,\alpha,\beta,r,\lambda)$
of functions $f\in\Lp$ satisfying the condition
\begin{displaymath}
	\w\le C\delta^\lambda,
\end{displaymath}
where $\lambda>0$ and~$C$ is a constant
not depending on~$\delta$.

The aim of the present paper
is to prove the following statement

\begin{thm}\label{th:coincidence}
	Let~$p$, $\alpha$, $\beta$ and~$r$ be given numbers
	such that $\allp$, $r\in\numN$;
	\begin{alignat*}3
		\frac12      &<\alpha\le2,
		  &\quad \frac12      &<\beta\le2
			  &\quad &\text{for $p=1$},\\
		1-\frac1{2p} &<\alpha<3-\frac1p,
		  &\quad 1-\frac1{2p} &<\beta<3-\frac1p
			  &\quad &\text{for $1<p<\infty$},\\
		1            &\le\alpha<3,
		  &\quad 1            &\le\beta<3
			  &\quad &\text{for $p=\infty$}.
	\end{alignat*}
	Then, for any~$\lambda$ satisfying the condition
	\begin{displaymath}
		\lambda_0=2\max\prn{
		  |\alpha-\beta|,\alpha-\frac32+\frac1{2p},
		  \beta-\frac32+\frac1{2p}}
		<\lambda<2r
	\end{displaymath}
	the class~$H(p,\alpha,\beta,r,\lambda)$
	coincides with the class~$E(p,\alpha,\beta,\lambda)$.
\end{thm}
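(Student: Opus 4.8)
The plan is to establish the coincidence by proving the two inclusions $H(p,\alpha,\beta,r,\lambda)\subseteq E(p,\alpha,\beta,\lambda)$ and $E(p,\alpha,\beta,\lambda)\subseteq H(p,\alpha,\beta,r,\lambda)$ separately, through a direct (Jackson-type) and an inverse (Bernstein--Stechkin-type) theorem respectively. The common engine for both halves is the observation that $\hatT t{}{f,x}$ acts as a multiplier on the Fourier--Jacobi expansion of $f$: applied to the relevant orthogonal polynomial of degree $k$ (built from the systems $\Px k$ and $\Py k$) it reproduces that polynomial multiplied by an eigenvalue $\lambda_k(t)$ of the form $1-\gamma_k(t)$, where $\gamma_k(t)\ge0$ and $\gamma_k(t)\asymp k^2t^2$ as $t\to0$. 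I would first record this eigenvalue computation, from which three facts follow: the operator $\hatT t{}{\cdot}$ is bounded on $\Lp$ under the stated ranges of $\alpha$ and $\beta$; the modulus $\w$ is monotone in $\delta$ and satisfies $\w\le C\norm f$; and, iterating the difference, $\Dl{\arr t r}r{P,x}$ for a polynomial $P$ is controlled by $\delta^{2r}$ times a weighted norm of a fixed differential image $\mathcal D^rP$ of $P$. This last point fixes the saturation order at $2r$ and explains the ceiling $\lambda<2r$.

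For the inclusion $H\subseteq E$ I would prove a Jackson inequality $\E\le C\,\wpar r{f,1/n}$. The construction uses the polynomial kernel $\krn\varphi$ to build a near-best approximant whose error is estimated by the modulus; the boundedness of the generalized translation in the weighted metric, together with the lower bound $\lambda>\lambda_0$, is precisely what keeps the kernel estimates uniform near the endpoints $x=\pm1$, where the weights $(1-x)^\alpha(1+x)^\beta$ are critical. Granting this, if $f\in H(p,\alpha,\beta,r,\lambda)$ then $\E\le C\,\wpar r{f,1/n}\le Cn^{-\lambda}$, so $f\in E(p,\alpha,\beta,\lambda)$.

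For the reverse inclusion $E\subseteq H$ I would argue by telescoping. Choose polynomials $P_{2^k}$ of degree less than $2^k$ with $\norm{f-P_{2^k}}\le\Epar{2^k}f\le C\,2^{-k\lambda}$, and write $f=P_1+\sum_{k\ge0}(P_{2^{k+1}}-P_{2^k})$. Using subadditivity of $\w$ together with the polynomial estimate $\wpar r{Q,\delta}\le C\delta^{2r}\norm{\mathcal D^rQ}$ and a weighted Bernstein inequality $\norm{\mathcal D^r(P_{2^{k+1}}-P_{2^k})}\le C\,2^{2rk}\norm{P_{2^{k+1}}-P_{2^k}}$, one splits the sum at the index $2^k\approx1/\delta$: the low-frequency block contributes $\sum_{2^k\le1/\delta}\delta^{2r}2^{2rk}2^{-k\lambda}$ and the high-frequency tail contributes $\sum_{2^k>1/\delta}2^{-k\lambda}$. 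The first sum converges because $\lambda<2r$ and the second because $\lambda>0$, each being bounded by $C\delta^\lambda$; hence $\w\le C\delta^\lambda$ and $f\in H(p,\alpha,\beta,r,\lambda)$. Here again the hypothesis $\lambda>\lambda_0$ enters through the admissibility of the weighted Bernstein inequality.

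The main obstacle I anticipate is not the telescoping bookkeeping but the two weighted polynomial inequalities at the heart of both halves: the Jackson estimate for the specific asymmetric kernel and, above all, the weighted Bernstein inequality for the operator $\mathcal D$ that generates the translation family. Establishing these requires a precise analysis of the eigenvalues $\lambda_k(t)$ and of the behaviour of the Jacobi polynomials $\Px k$ and $\Py k$ near $x=\pm1$, and it is exactly this endpoint analysis that forces the parameter ranges for $\alpha$, $\beta$ and the lower bound $\lambda_0$. Verifying that a single $\lambda_0$ is simultaneously sufficient for both the direct and the inverse estimate is the delicate point.
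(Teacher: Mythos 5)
Your overall skeleton --- a direct (Jackson) half giving $H\subseteq E$ and an inverse (telescoping) half giving $E\subseteq H$ --- is exactly the paper's (Theorems \ref{th:HsubE} and \ref{th:EsubH}), and your direct half matches the paper's in substance: one integrates the $r$-th generalized difference against the kernel $A(t)=\krn t$, shows via the multiplier identities for $T_{1;y}$, $T_{2;y}$ and self-adjointness that the resulting function is an algebraic polynomial (Theorem \ref{th:T-Q}), and then the generalized Minkowski inequality bounds the error by the modulus. Note, however, that in the paper this half uses \emph{neither} boundedness of the generalized translation \emph{nor} the hypothesis $\lambda>\lambda_0$: Theorem \ref{th:HsubE} holds for every $\lambda>0$ under only the upper restrictions on $\alpha,\beta$. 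The lower bound $\lambda_0$ is purely an inverse-theorem phenomenon, so your remark that $\lambda>\lambda_0$ is what "keeps the kernel estimates uniform near the endpoints" misplaces where that hypothesis is consumed.

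The genuine gap is in your inverse half, at precisely the point you flag as delicate. Your high-frequency tail estimate $\sum_{2^k>1/\delta}\wpar r{Q_k,\delta}\le C\sum_{2^k>1/\delta}\norm{Q_k}$ rests on the claim that $\hatT t{}{\cdot,x}$ is uniformly bounded on $\Lp$, i.e. $\w\le C\norm f$. In this asymmetric weighted setting that clean bound is not available, and the paper never proves it; what it proves (Theorem \ref{th:bound-T}) is a bound with additional terms $t^{2(\gamma_1+\gamma_2)}\normpar f{p,\alpha-\gamma_1,\beta-\gamma_2}$, $t^{2\gamma_3}\normpar f{p,\alpha-\gamma_3,\beta-\gamma_3}$, \dots\ in \emph{weaker} weights, which for a general $f\in\Lp$ need not even be finite. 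This is exactly why $\lambda_0$ appears in the statement at all: were the clean bound true, the coincidence would hold for all $0<\lambda<2r$. The paper's remedy is structurally different from your splitting: it keeps a finite sum, $f=(f-P_{2^N})+\sum_{k=0}^{N}Q_k$ with $2^{-N}\asymp\delta$, and handles the two kinds of terms by two different devices. For the polynomial blocks $Q_k$ the weaker-weight norms are converted back to $\norm{Q_k}$ by the weight-shifting Markov-type inequality of Lemma \ref{lm:bernshtein-markov}, at a cost $2^{2k\max(\cdot)}$ that is absorbed because $\lambda<2r$; for the non-polynomial remainder $f-P_{2^N}$ they are controlled by Lemma \ref{lm:rho-sigma}, which converts an approximation rate in a weaker weight into one in the original weight at the cost $2^{N\lambda_0}$ --- and this is the step where $\lambda>\lambda_0$ is actually used. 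Without these two devices your tail sum cannot even be started. A secondary unsupported step is your "common engine": the eigenvalue asymptotics $\lambda_k(t)=1-\gamma_k(t)$ with $\gamma_k(t)\asymp k^2t^2$ is neither established in the paper nor used by it --- the actual multiplier is $R_k(\cos t)=\Py k(\cos t)+\tfrac32\sin^2t\,\Px k(\cos t)$, about which nothing quantitative is invoked; instead, the paper bounds $\Dl{\arr t r}r{Q_k,x}$ through an explicit second-derivative integral representation combined with Lemma \ref{lm:bernshtein-markov}, yielding the recurrence $I_k\le C\delta^2 2^{2k}\norm{\Dl{\arr t{r-1}}{r-1}{Q_k,x}}$ that replaces your single operator $\mathcal D^r$.
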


The validity of Theorem~\ref{th:coincidence}
will follow from the validity of Theorems~\ref{th:HsubE}
and~\ref{th:EsubH},
which we are going to prove below.

\subsection{}

Put $y=\cos t$, $z=-\cos\varphi$ in the definition of $\hatT t{}{f,x}$
and denote the resulting operator by $\T y{}{f,x}$.
Let us rewrite it in the form
\begin{multline*}
	\T y{}{f,x}
	  =\frac1{\pi\Si x}\int_{-1}^1
		  \Big(
			1-R^2-2\Si y\Si z\\
	+4\Si x\Si y\Si{z}^2
		  \Big)
		  f(R)\frac{dz}{\sqrt{1-z^2}},
\end{multline*}
where $R=xy-z\sqrt{1-x^2}\sqrt{1-y^2}$.
We define the operator of generalized translation
of order~$r$ by
\begin{align*}
	\T y1{f,x}         &=\T y{}{f,x},\\
	\T{\arr y r}r{f,x} &=\T{y_r}{}{\T{\arr y{r-1}}{r-1}{f,x},x}
	  \quad(r=2,3,\dotsc).
\end{align*}

By $P_\nu^{(\alpha,\beta)}(x)$ $(\nu=0,1,\dotsc)$
we denote the Jacobi polynomials,
i.e.\ algebraic polynomials of degree~$\nu$
orthogonal on the segment~$[-1,1]$
with a weight $(1-x)^{\alpha}(1+x)^{\beta}$
and normalized by the condition
$P_\nu^{(\alpha,\beta)}(1)=1$ $(\nu=0,1,\dotsc)$.

For any integrable function~$f$ on~$[-1,1]$
with a weight $\Si{x}^2$,
we denote by $a_n(f)$
the Fourier--Jacobi coefficients of~$f$
with respect to the system of Jacobi polynomials
$\brc{\Px n(x)}_{n=0}^\infty$, i.e.
\begin{displaymath}
	a_n(f)=\int_{-1}^1f(x)\Px n(x)\Si{x}^2\,dx
	\quad(n=0,1,\dotsc).
\end{displaymath}

Introduce certain operators
which will play an auxiliary role later on.
First we set
\begin{align*}
	\T{1;y}{}{f,x}
	  &=\frac1{\pi\Si x}\int_{-1}^1\prn{1-R^2-2\Si y\Si z}f(R)
		  \frac{dz}{\sqrt{1-z^2}},\\
	\T{2;y}{}{f,x}
	  &=\frac8{3\pi}\int_{-1}^1\Si{z}^2f(R)\frac{dz}{\sqrt{1-z^2}},
\end{align*}
where $R=xy-z\sqrt{1-x^2}\sqrt{1-y^2}$,
and then define the corresponding operators of order~$r$
by
\begin{align*}
	\T{k;y}1{f,x}
	  &=\T{k;y}{}{f,x},\\
	\T{k;\arr y r}r{f,x}
	  &=\T{k;y_r}{}{\T{k;\arr y{r-1}}{r-1}{f,x},x}
	  \quad(r=2,3,\dotsc)
\end{align*}
for $k=1,2$.

\subsection{}

\begin{lmm}\label{lm:rho-sigma}
	Let $f\in\Lp$ and let the numbers~$p$, $\alpha$, $\beta$,
	$\rho$, $\sigma$ and~$\lambda$
	be such that
	$\allp$, $\rho\ge0$, $\sigma\ge0$,
	$\lambda>\lambda_0=2\max\{\rho,\sigma\}$;
	\begin{alignat*}3
		\alpha &>-\frac1p, &\quad \beta &>-\frac1p &\quad
			&\text{for $1\le p<\infty$},\\
		\alpha &\ge0, 		 &\quad \beta &\ge0 		 &\quad
			&\text{for $p=\infty$}.
	\end{alignat*}
	If there exists a sequence of algebraic polynomials
	$\brc{P_{2^n}(x)}_{n=0}^\infty$
	such that
		\begin{displaymath}
		\|f-P_{2^n}\|_{p,\alpha+\rho,\beta+\sigma}
		\le\frac{\Cn}{2^{n\lambda}},
	\end{displaymath}
	then the following inequalities also hold true
	\begin{displaymath}
		\norm{f-P_{2^n}}
		\le\frac{\Cn}{2^{n(\lambda-\lambda_0)}}
		\quad(n=1,2,\dotsc),
	\end{displaymath}
	where the constants~$\prevC$ and~$\lastC$
	do not depend on~$n$.
\end{lmm}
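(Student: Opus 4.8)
The plan is to reduce the assertion to a weighted Bernstein--Nikolskii-type inequality for algebraic polynomials, which trades a decrease of the Jacobi exponents for a power of the degree, and then to sum a geometric series. Write $Q_k=P_{2^{k+1}}-P_{2^k}$; this is an algebraic polynomial of degree at most $2^{k+1}$. By the triangle inequality and the hypothesis,
\begin{multline*}
	\normpar{Q_k}{p,\alpha+\rho,\beta+\sigma}
	  \le\normpar{f-P_{2^{k+1}}}{p,\alpha+\rho,\beta+\sigma}\\
	    +\normpar{f-P_{2^k}}{p,\alpha+\rho,\beta+\sigma}
	  \le\frac{C}{2^{k\lambda}}.
\end{multline*}

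The crux is the inequality
\begin{displaymath}
	\norm Q\le Cm^{\lambda_0}\normpar Q{p,\alpha+\rho,\beta+\sigma},
	\qquad\lambda_0=2\max\{\rho,\sigma\},
\end{displaymath}
valid for every algebraic polynomial $Q$ of degree at most $m$. To prove it I would split the integral over $[-1,1]$ into $[0,1]$ and $[-1,0]$. On $[0,1]$ the factor $(1+x)^{\beta}$ is comparable to $(1+x)^{\beta+\sigma}$, so only the exponent at the endpoint $x=1$ has to be changed, which costs a factor $m^{2\rho}$; symmetrically, on $[-1,0]$ only the exponent at $x=-1$ must be changed, costing $m^{2\sigma}$. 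Adding the two pieces produces the single factor $m^{2\max\{\rho,\sigma\}}$ rather than $m^{2(\rho+\sigma)}$, which is exactly where the maximum in $\lambda_0$ comes from. Each one-sided step is a weighted inequality of the form
\begin{displaymath}
	\int_0^1|Q(x)|^p(1-x)^{\alpha p}\,dx
	  \le Cm^{2\rho p}\int_0^1|Q(x)|^p(1-x)^{(\alpha+\rho)p}\,dx,
\end{displaymath}
which I would obtain via the substitution $x=\cos\theta$ (so that $1-x$ behaves like $\theta^2$ near the endpoint), reducing it to a weighted inequality for trigonometric polynomials; the hypothesis $\alpha>-1/p$ guarantees that the right-hand integral is finite, and for $p=\infty$ the analogous sup-norm estimate follows from the fact that a polynomial of degree $m$ cannot fall off on a scale shorter than $m^{-2}$ near $x=1$. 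This weighted polynomial inequality (together with the interval-splitting device that upgrades the sum of exponents to their maximum) is the main obstacle; everything else is bookkeeping.

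Finally, since $\lambda>\lambda_0$, applying the polynomial inequality to $Q_k$ gives $\norm{Q_k}\le C\,2^{(k+1)\lambda_0}\normpar{Q_k}{p,\alpha+\rho,\beta+\sigma}\le C\,2^{-k(\lambda-\lambda_0)}$, so $\sum_{k}\norm{Q_k}<\infty$ and the sequence $\{P_{2^n}\}$ is Cauchy, hence convergent, in $\Lp$. Its limit must coincide with $f$: convergence in $\Lp$ implies convergence in $L_{p,\alpha+\rho,\beta+\sigma}$, because the weight ratio $(1-x)^{\rho}(1+x)^{\sigma}$ is bounded on $[-1,1]$, and in the latter space the limit is $f$ by hypothesis. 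Therefore $f-P_{2^n}=\sum_{k=n}^\infty Q_k$ in $\Lp$, and summing the geometric series
\begin{displaymath}
	\norm{f-P_{2^n}}\le\sum_{k=n}^\infty\norm{Q_k}
	  \le C\sum_{k=n}^\infty 2^{-k(\lambda-\lambda_0)}
	  \le\frac{C'}{2^{n(\lambda-\lambda_0)}},
\end{displaymath}
which is the desired estimate.
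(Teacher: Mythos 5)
The paper itself contains no proof of Lemma~\ref{lm:rho-sigma}: it is quoted from an earlier paper of Potapov, so there is no in-text argument to compare yours against. That said, your proposal is the standard proof of results of this kind and is structurally sound: the dyadic blocks $Q_k=P_{2^{k+1}}-P_{2^k}$ with $\normpar{Q_k}{p,\alpha+\rho,\beta+\sigma}\le C2^{-k\lambda}$, the trade of weight exponents for powers of the degree, convergence of $\sum_k\norm{Q_k}$ because $\lambda>\lambda_0$, identification of the $\Lp$-limit of $P_{2^n}$ with $f$ via the boundedness of $(1-x)^\rho(1+x)^\sigma$, and the tail-sum estimate. The one remark worth making is that the inequality you single out as ``the main obstacle,''
\begin{displaymath}
	\norm{Q}\le Cm^{2\max\{\rho,\sigma\}}
	  \normpar{Q}{p,\alpha+\rho,\beta+\sigma}
	\qquad(\deg Q\le m),
\end{displaymath}
does not need to be re-derived: it is precisely the second inequality of Lemma~\ref{lm:bernshtein-markov}, quoted in the paper from Halilova's work, and its hypotheses on $p$, $\alpha$, $\beta$, $\rho$, $\sigma$ are exactly those of Lemma~\ref{lm:rho-sigma}. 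Your sketch of that inequality (splitting $[-1,1]$ at $0$ so that each half sees only one endpoint weight, which is what turns $\rho+\sigma$ into $\max\{\rho,\sigma\}$, then the substitution $x=\cos\theta$) identifies the right mechanism, but as written it is only a sketch --- the behaviour of a polynomial of degree~$m$ on the $m^{-2}$-neighbourhood of an endpoint is exactly the nontrivial point, and you do not carry it out. Replacing that sketch by a citation of Lemma~\ref{lm:bernshtein-markov} turns your proposal into a complete and self-contained proof within the paper's own toolkit.
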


Lemma~\ref{lm:rho-sigma}
was proved in~\cite{potapov:trudy-75}.

\begin{lmm}\label{lm:bernshtein-markov}
	Let $P_n(x)$ be an algebraic polynomial
	of degree not greater than $n-1$,
	$\allp$, $\rho\ge0$, $\sigma\ge0$.
	Assume that
	\begin{alignat*}3
		\alpha &>-\frac1p, &\quad \beta &>-\frac1p &\quad
			&\text{for $1\le p<\infty$},\\
		\alpha &\ge0, 		 &\quad \beta &\ge0 		 &\quad
			&\text{for $p=\infty$}.
	\end{alignat*}
	Then
	\begin{gather*}
		\normpar{P'_n(x)}{p,\alpha+\frac12,\beta+\frac12}
		  \le\Cn n\norm{P_n},\\
		\norm{P_n}
		  \le\Cn n^{2\max\prn{\rho,\sigma}}
			\normpar{P_n}{p,\alpha+\rho,\beta+\sigma},
	\end{gather*}
	where the constants~$\prevC$ and~$\lastC$
	do not depend on~$n$.
\end{lmm}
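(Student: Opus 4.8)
The plan is to reduce both inequalities to statements about trigonometric polynomials by means of the substitution $x=\cos\theta$, $\theta\in[0,\pi]$. Writing $g(\theta)=P_n(\cos\theta)$, which is an even (cosine) trigonometric polynomial of degree at most $n-1$, one has $P'_n(\cos\theta)\sin\theta=-g'(\theta)$ and $\Si x=\sin^2\theta$. Hence, setting
\begin{displaymath}
  W(\theta)=\prn{2\sin^2\frac\theta2}^{p\alpha}\prn{2\cos^2\frac\theta2}^{p\beta}\sin\theta,
\end{displaymath}
the change of variable gives $\norm{P_n}^p=\int_0^\pi|g(\theta)|^pW(\theta)\,d\theta$, while the factor $(1-x)^{p/2}(1+x)^{p/2}=\sin^p\theta$ absorbed into the derivative weight converts the derivative norm into $\normpar{P'_n}{p,\alpha+\frac12,\beta+\frac12}^p=\int_0^\pi|g'(\theta)|^pW(\theta)\,d\theta$ with the \emph{same} weight $W$. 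Near the endpoints $W(\theta)\asymp\theta^{2p\alpha+1}$ as $\theta\to0$ and $W(\theta)\asymp(\pi-\theta)^{2p\beta+1}$ as $\theta\to\pi$, and the hypotheses $\alpha,\beta>-1/p$ guarantee precisely that these exponents exceed $-1$, so $W$ is integrable.

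With this reduction the first (Bernstein-type) inequality becomes the weighted estimate $\normpar{g'}{L_p(W)}\le C\,n\,\normpar{g}{L_p(W)}$ for cosine polynomials. On any subinterval compactly contained in $(0,\pi)$ the weight is bounded above and below, so there the classical $L_p$ Bernstein inequality for trigonometric polynomials applies directly, and the whole difficulty is concentrated at the two endpoints. There the exponents $2p\alpha+1$ and $2p\beta+1$ may lie well outside the Muckenhoupt $A_p$ range, so the usual weighted Bernstein inequality obtained through the conjugate function is unavailable. The feature that rescues the estimate is that $g$ is \emph{even} about $0$ and about $\pi$, whence $g'(0)=g'(\pi)=0$; exploiting this vanishing by a Hardy-type weighted argument near each singular point of $W$ lets one dominate $\int_0^\delta|g'|^pW$ by the weighted norm of $g$, with the correct single power of $n$ supplied by one application of the plain Bernstein inequality. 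Combining the interior and endpoint contributions yields the first inequality. I expect this endpoint analysis, namely pushing the weighted Bernstein inequality past the $A_p$ threshold by using that $g'$ vanishes exactly where $W$ is singular, to be the main obstacle.

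For the second (Markov-type) inequality I would first use the symmetry $x\mapsto-x$, which interchanges $(\alpha,\rho)$ with $(\beta,\sigma)$, to reduce to increasing the weight at a single endpoint, say $x=1$; the general bound with exponent $2\max(\rho,\sigma)$ then follows by composing the two one-endpoint estimates. For the one-endpoint statement $\norm{P_n}\le C\,n^{2\rho}\,\normpar{P_n}{p,\alpha+\rho,\beta}$ I would split $[-1,1]$ at the Chebyshev scale $1-x=n^{-2}$. On the bulk $1-x\ge n^{-2}$ one writes $(1-x)^{p\alpha}=(1-x)^{p(\alpha+\rho)}(1-x)^{-p\rho}$ and bounds $(1-x)^{-p\rho}\le n^{2p\rho}$ pointwise, which gives the claim over that region at once. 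On the endpoint cap $1-x<n^{-2}$, which in the variable $\theta$ is an interval $[0,c/n]$, the pointwise comparison runs the wrong way, and one needs a local estimate: the lighter-weighted integral over the tip is controlled, up to the factor $n^{2p\rho}$, by the heavier-weighted integral over a comparable neighbourhood. Concretely, $\int_0^{c/n}|g|^p\theta^{2p\alpha+1}\,d\theta\asymp n^{-(2p\alpha+2)}\max_{[0,c/n]}|g|^p$, the analogous heavier integral over $[c/n,C/n]$ is $\gtrsim n^{-(2p(\alpha+\rho)+2)}$ times a comparable value of $|g|^p$, and the ratio of the $n$-prefactors is exactly $n^{2p\rho}$; the two values of $|g|^p$ are matched by a local Nikolskii--Remez inequality expressing that a degree-$n$ polynomial does not oscillate on the endpoint scale $1/n$.

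The main obstacles are therefore the two endpoint analyses: in the Bernstein case, establishing the weighted inequality for exponents outside the $A_p$ range by invoking $g'(0)=g'(\pi)=0$; in the Markov case, the local Nikolskii--Remez estimate on the endpoint cap $[0,c/n]$. Once these local pieces are in place, the global inequalities follow by combining them with the elementary interior and bulk bounds, and all constants depend only on $p$, $\alpha$, $\beta$, $\rho$, $\sigma$ and not on $n$.
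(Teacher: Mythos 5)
A preliminary remark: the paper itself does not prove this lemma --- it is imported wholesale from \cite{halilova:izv-74} --- so there is no internal proof to compare with, and your proposal must stand as a self-contained proof of Khalilova's inequalities. It does not, for two concrete reasons. The first is in the Markov-type part: composing the two one-endpoint estimates, as you propose, gives
\begin{displaymath}
	\normpar{P_n}{p,\alpha,\beta}
	\le C n^{2\rho}\normpar{P_n}{p,\alpha+\rho,\beta}
	\le C' n^{2\rho+2\sigma}\normpar{P_n}{p,\alpha+\rho,\beta+\sigma},
\end{displaymath}
i.e.\ the factor $n^{2(\rho+\sigma)}$, which is strictly weaker than the asserted $n^{2\max(\rho,\sigma)}$ as soon as both $\rho>0$ and $\sigma>0$. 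The maximum cannot be reached by iterating one-endpoint bounds; the two endpoint losses must be made to act on disjoint pieces --- e.g.\ split $[-1,1]$ at $0$, note that on $[0,1]$ the factors $(1+x)^{p\beta}$ and $(1+x)^{p(\beta+\sigma)}$ are comparable with absolute constants so that this half suffers only the $n^{2p\rho}$ loss, argue symmetrically on $[-1,0]$, and add. This is repairable, but as written your reduction proves a weaker inequality than the lemma states.

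The second, more serious, gap is that the local estimates on which everything hinges are asserted rather than proved, and in the Bernstein part the proposed mechanism cannot close. Classical Bernstein does not localize: for a subinterval $I\subset(0,\pi)$ one only gets $\normpar{g'}{L_p(I)}\le\normpar{g'}{L_p(0,2\pi)}\le n\normpar{g}{L_p(0,2\pi)}$, and the global \emph{unweighted} norm of $g$ is not dominated by $\normpar{g}{L_p(W)}$ when $W$ vanishes at $0$ and $\pi$ (a polynomial may concentrate exactly there); converting it back costs a positive power of $n$ --- via precisely the Markov part of this very lemma --- which destroys the single factor $n$. So even your ``interior'' step is not free. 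At the endpoints, in addition, Hardy-type inequalities run in the wrong direction: they bound a function vanishing at the singularity by its derivative ($\int_0^\delta|F|^p\theta^{a-p}\,d\theta\le C\int_0^\delta|F'|^p\theta^{a}\,d\theta$ for $F(0)=0$), so applied to $F=g'$ they estimate $g'$ by $g''$, moving away from $g$; no functional inequality bounds $\normpar{g'}{L_p(W)}$ by $\normpar{g}{L_p(W)}$ without exploiting polynomiality of degree $n$ \emph{locally}. Every actual proof supplies such a local ingredient: either a reproducing-kernel representation $g'=g*V_n'$ with decay $|V_n'(s)|\le Cn^2(1+n|s|)^{-K}$ combined with the weight comparison $W(\theta+s)\le CW(\theta)(1+n|s|)^{M}$, or the pointwise bound $|P_n'(x)|\le Cn\Delta_n(x)^{-1}\max_{|y-x|\le\Delta_n(x)}|P_n(y)|$ with $\Delta_n(x)=n^{-1}\sqrt{1-x^2}+n^{-2}$, followed by a Remez--Nikolskii step. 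Your Markov-cap argument tacitly assumes exactly that Remez--Nikolskii step --- that $\max_{[0,c/n]}|g|^p$ is controlled by the mean of $|g|^p$ over the adjacent \emph{disjoint} interval $[c/n,C/n]$ --- which is a genuine polynomial inequality, false for general functions, and of essentially the same depth as the lemma itself. Until these local estimates are established, what you have is a correct change of variables and an accurate diagnosis of where the difficulty sits (weights outside $A_p$, evenness must be used), but not a proof.
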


Lemma was proved in~\cite{halilova:izv-74}.

\begin{lmm}\label{lm:T*P}
	The operators~$T_{1;y}$ and~$T_{2;y}$
	have the following properties
	\begin{gather*}
		\T{1;y}{}{\Px\nu,x}=\Px\nu(x)\Py\nu(y),\\
		\T{2;y}{}{\Px\nu,x}=\Px\nu(x)\Px\nu(y)
	\end{gather*}
	for $\nu=0,1,\dotsc$
\end{lmm}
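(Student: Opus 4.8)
The plan is to prove Lemma~\ref{lm:T*P} by direct computation, exploiting the fact that the operators $T_{1;y}$ and $T_{2;y}$ act on each Jacobi polynomial $\Px\nu$ as integral operators against the weight $(1-z^2)^{-1/2}$ over $[-1,1]$. The key observation is that $R=xy-z\sqrt{1-x^2}\sqrt{1-y^2}$ is precisely the argument appearing in the classical product formula for Jacobi (in particular ultraspherical) polynomials. Since $\Px\nu=P_\nu^{(2,2)}$ is an ultraspherical-type polynomial, the integral $\frac1\pi\int_{-1}^1 P_\nu^{(2,2)}(R)\,\frac{dz}{\sqrt{1-z^2}}$ should reduce, via the addition/product formula, to a product of a polynomial in $x$ and a polynomial in $y$. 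I would start by recalling or citing that classical product formula and then carefully track how the extra algebraic factors $\prn{1-R^2-2\Si y\Si z}$ and $\Si z^2$ modify the outcome.

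First I would treat $T_{2;y}$, which looks cleaner: one must evaluate $\frac8{3\pi}\int_{-1}^1\Si z^2\,P_\nu^{(2,2)}(R)\,\frac{dz}{\sqrt{1-z^2}}$ and show it equals $\Px\nu(x)\Px\nu(y)$. The factor $\Si z^2=(1-z^2)^2$ combines with $(1-z^2)^{-1/2}$ to give weight $(1-z^2)^{3/2}$, which is exactly the ultraspherical weight for parameter matching $P_\nu^{(2,2)}$; thus the integral is the natural inner-product-type projection that the product formula handles, and the normalization constant $\frac8{3\pi}$ should be exactly what makes both sides equal $1$ at $x=y=1$ (since $\Px\nu(1)=1$). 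I would verify the constant by evaluating at $x=y=1$, where $R=z\cdot0+1\cdot1=1$ is forced only in the limit, so more carefully I would check it by computing $\int_{-1}^1(1-z^2)^{3/2}\,dz=\tfrac{3\pi}8$, confirming the normalization.

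For $T_{1;y}$ the claim is $\T{1;y}{}{\Px\nu,x}=\Px\nu(x)\Py\nu(y)$, with a Legendre polynomial $P_{\nu+2}^{(0,0)}$ appearing in the $y$-variable rather than a $P^{(2,2)}$. Here I would substitute $\Px\nu=P_\nu^{(2,2)}$ and use the differentiation identity relating $P_\nu^{(2,2)}$ to derivatives of Legendre polynomials, namely that $P_{\nu+2}^{(0,0)}$ differentiated twice (up to constants) yields $P_\nu^{(2,2)}$; this is the source of the index shift $\nu\mapsto\nu+2$. The factor $\prn{1-R^2-2\Si y\Si z}$ is designed precisely so that, after integration against $(1-z^2)^{-1/2}$, it produces the Legendre polynomial in $y$ instead of the ultraspherical one. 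I would carry this out by expressing everything through the generating relation for $R$ and matching degrees in $y$.

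The main obstacle will be the bookkeeping in the $T_{1;y}$ case: verifying that the specific algebraic prefactor $1-R^2-2\Si y\Si z$ exactly converts the $(2,2)$-weighted product formula into one yielding $\Py\nu$, and nailing down every normalization constant so that the evaluation at the endpoint $x=y=1$ gives the correct value $1$ on both sides. I expect the cleanest route is to fix $x$ and $\nu$, regard both sides as polynomials in $y$ of the stated degree, and check agreement either by matching finitely many coefficients or by verifying that both sides satisfy the same Jacobi differential equation in $y$ with the same normalization; the product-formula identity, combined with the derivative relation between $P^{(2,2)}$ and $P^{(0,0)}$, is the crux, and everything else is routine constant-chasing.
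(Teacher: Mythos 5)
The paper offers no proof of Lemma~\ref{lm:T*P} at all: it is quoted from~\cite{potapov:mat-99}, so there is no internal argument to compare yours with, and your proposal must stand on its own merits. In essence it does. The $T_{2;y}$ half is already complete: after substituting $z=-\cos\varphi$ the operator becomes
\begin{displaymath}
	\frac8{3\pi}\int_0^\pi
	f\prn{xy+\sqrt{1-x^2}\sqrt{1-y^2}\cos\varphi}\sin^4\varphi\,d\varphi,
\end{displaymath}
which is precisely Gegenbauer's product formula for the ultraspherical family with parameter $\lambda=\frac52$, i.e.\ for $\Px\nu$ normalized by $\Px\nu(1)=1$; your constant check $\int_{-1}^1(1-z^2)^{3/2}\,dz=\frac{3\pi}8$ is correct (and at $x=y=1$ one has $R=1$ identically, not only in a limit). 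For the $T_{1;y}$ half your guiding idea --- the shift $\nu\mapsto\nu+2$ comes from $\Px\nu(u)=c_\nu\,\frac{d^2}{du^2}P_{\nu+2}^{(0,0)}(u)$, and the prefactor $1-R^2-2\Si y\Si z$ is engineered to convert the Legendre product formula --- is the right one, and the bookkeeping you defer does close. Differentiate the Legendre product formula $\frac1\pi\int_0^\pi P_{\nu+2}^{(0,0)}(R)\,d\varphi=P_{\nu+2}^{(0,0)}(x)\,P_{\nu+2}^{(0,0)}(y)$ twice with respect to~$x$; the elementary identities
\begin{displaymath}
	\Si x\prn{\frac{\partial R}{\partial x}}^2=1-R^2-\Si y\sin^2\varphi,
	\qquad
	\Si x\frac{\partial^2 R}{\partial x^2}
	=-\frac{\sqrt{1-y^2}\cos\varphi}{\sqrt{1-x^2}},
\end{displaymath}
together with a single integration by parts in~$\varphi$ (which converts the term containing the first derivative of $P_{\nu+2}^{(0,0)}$ at~$R$ into $-\frac{1-y^2}\pi\int_0^\pi\sin^2\varphi\,\frac{d^2}{dR^2}P_{\nu+2}^{(0,0)}(R)\,d\varphi$), turn the differentiated formula into
\begin{displaymath}
	\frac1{\pi\Si x}\int_0^\pi
	\prn{1-R^2-2\Si y\sin^2\varphi}
	\frac{d^2}{dR^2}P_{\nu+2}^{(0,0)}(R)\,d\varphi
	=\frac{d^2}{dx^2}P_{\nu+2}^{(0,0)}(x)\,P_{\nu+2}^{(0,0)}(y),
\end{displaymath}
which upon multiplication by~$c_\nu$ is exactly $\T{1;y}{}{\Px\nu,x}=\Px\nu(x)\Py\nu(y)$. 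One caution: commit to this differentiation argument and drop your two fallbacks (matching coefficients in~$y$, or appealing to the Jacobi differential equation); they are shakier than they look, since it is not even evident a priori that $\T{1;y}{}{\Px\nu,x}$ is a polynomial in~$x$ --- the divisibility of the integral by $\Si x$ has to be established, whereas in the differentiation argument it comes out automatically.
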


Lemma~\ref{lm:T*P} was proved in~\cite{potapov:mat-99}.

\begin{lmm}\label{lm:T*fg}
	Let $g(x)\T{k;y}{}{f,x}\in\Lmu$ for every~$y$.
	Then for $k=1,2$ the following equality holds true
	\begin{displaymath}
		\int_{-1}^1f(x)\T{k;y}{}{g,x}\Si{x}^2\,dx
		=\int_{-1}^1g(x)\T{k;y}{}{f,x}\Si{x}^2\,dx.
	\end{displaymath}
\end{lmm}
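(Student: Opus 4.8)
The plan is to reduce the claimed equality to Fubini's theorem by writing both sides as double integrals over the same domain and exhibiting a symmetry of the integrand under an appropriate change of variables. First I would write out the left-hand side explicitly, substituting the integral definition of $\T{k;y}{}{g,x}$ (for $k=1$ or $k=2$ respectively). This produces a double integral: an outer integration $\int_{-1}^1 \dots \Si x^2\,dx$ in the variable~$x$ and an inner integration $\int_{-1}^1 \dots \frac{dz}{\sqrt{1-z^2}}$ coming from the translation operator, with $R=xy-z\sqrt{1-x^2}\sqrt{1-y^2}$ as the argument of~$g$. The integrability hypothesis $g(x)\T{k;y}{}{f,x}\in\Lmu$ is exactly what is needed to justify interchanging the order of integration, so the analytic legwork is controlled.

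The heart of the argument is a change of variables that swaps the roles of the ``base point''~$x$ and the ``translated point''~$R$. With $y$ fixed, I would set $y=\cos t$ and regard the map $(x,z)\mapsto(R,w)$, where the new variable plays a role symmetric to the old one; geometrically, the operator corresponds to a rotation on the sphere by the angle~$t$, and the key fact is that this transformation is an involution (rotation by $t$ followed by rotation by $-t$ returns to the start) whose Jacobian, together with the weight factors $\Si x^2$ and $\frac{1}{\sqrt{1-z^2}}$, is invariant. Concretely, I expect $\Si R^2\,dR$ paired with the corresponding angular measure to transform into $\Si x^2\,dx$ times $\frac{dz}{\sqrt{1-z^2}}$, and the kernel factor (either $1-R^2-2\Si y\Si z$ for $k=1$ or $\Si z^2$ for $k=2$) to go over into the kernel of $\T{k;y}{}{f,R}$. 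Carrying out this substitution turns the left-hand double integral into the right-hand one.

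The main obstacle I anticipate is verifying the measure-and-kernel invariance for the $k=1$ operator, whose kernel $1-R^2-2\Si y\Si z$ mixes the integration variable~$z$ with the parameter~$y$; unlike the cleaner $k=2$ case (kernel $\Si z^2$, essentially a symmetric averaging), here one must check that the combination of the weight $\Si x^2$, the kernel, and the Jacobi-measure factor is genuinely symmetric under the involution. I would handle this by returning to the original trigonometric parametrization $x=\cos\theta$, $y=\cos t$, $z=-\cos\varphi$, where the generalized translation is a spherical rotation and the symmetry becomes a transparent statement about the invariance of the rotation-invariant measure on the sphere; the algebraic identity then follows by unwinding that parametrization. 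As a consistency check, one can test the identity on the Jacobi polynomials $\Px\nu$ using Lemma~\ref{lm:T*P}, which gives $\T{k;y}{}{\Px\nu,x}=\Px\nu(x)Q_\nu(y)$ with $Q_\nu$ symmetric in the two slots, confirming the claimed symmetry on a dense system and thereby on all admissible~$f,g$.
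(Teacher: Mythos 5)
Your proposal matches the paper's proof essentially step for step: the paper writes the left-hand side as a double integral and performs exactly the involutive change of variables $(x,z)\mapsto(R,V)$ that you describe --- given there explicitly by~\eqref{eq:x-z} --- under which the measure $\Si x\,\frac{dz\,dx}{\sqrt{1-z^2}}$ goes over into $\Si R\,\frac{dV\,dR}{\sqrt{1-V^2}}$ and each kernel transforms into the kernel of $\T{k;y}{}{f,R}$, for both $k=1$ and $k=2$. One small correction to your closing side remark: the consistency check on Jacobi polynomials works because $T_{k;y}$ is diagonal in the orthogonal system $\brc{\Px\nu}$ (so the bilinear form is automatically symmetric), not because the eigenvalue factor is symmetric in the two slots --- indeed for $k=1$ Lemma~\ref{lm:T*P} gives the factor $\Py\nu(y)$, which is genuinely asymmetric.
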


\begin{proof}
	Let $k=1$ and
	\begin{multline*}
		I_1:=\int_{-1}^1f(x)\T{1;y}{}{g,x}\Si{x}^2\,dx\\
		=\frac1\pi\int_{-1}^1\int_{-1}^1f(x)g(R)\prn{1-R^2-2\Si y\Si z}
			  \Si x\frac{dz\,dx}{\sqrt{1-z^2}},
	\end{multline*}
	where $R=xy-z\sqrt{1-x^2}\sqrt{1-y^2}$.
	Performing change of variables in the double integral
	by the formulas
	\begin{equation}\label{eq:x-z}
		\begin{aligned}
		x&=Ry+V\sqrt{1-R^2}\sqrt{1-y^2},\\
		z&=-\frac{R\sqrt{1-y^2}-Vy\sqrt{1-R^2}}
			  {\sqrt{1-\prn{Ry+V\sqrt{1-R^2}\sqrt{1-y^2}}^2}},
		\end{aligned}
	\end{equation}
	we get
	\begin{multline*}
		I_1=\frac1\pi\int_{-1}^1\int_{-1}^1\Si Rf
			\prn{Ry+V\sqrt{1-R^2}\sqrt{1-y^2}}g(R)\\
		\times\prn{1-\prn{Ry+V\sqrt{1-R^2}\sqrt{1-y^2}}^2-2\Si y\Si V}
				\frac{dV\,dR}{\sqrt{1-V^2}}\\
		=\int_{-1}^1g(R)\T{1;y}{}{f,R}\Si{R}^2\,dR,
	\end{multline*}
	which proves the equality of the lemma for $k=1$.
	
	Let $k=2$ and
	\begin{multline*}
		I_2:=\int_{-1}^1f(x)\T{2;y}{}{g,x}\Si{x}^2\,dx\\
		=\frac8{3\pi}\int_{-1}^1\int_{-1}^1f(x)g(R)\Si{x}^2\Si{z}^2
			\frac{dz\,dx}{\sqrt{1-z^2}}.
	\end{multline*}
	Performing again the change~\eqref{eq:x-z}
	in the double integral
	we get
	\begin{multline*}
		I_2=\frac8{3\pi}\int_{-1}^1\int_{-1}^1f
			\prn{Ry+V\sqrt{1-R^2}\sqrt{1-y^2}}g(R)\Si{R}^2\\
		\times\Si{V}^2\frac{dV\,dR}{\sqrt{1-V^2}}
		  =\int_{-1}^1g(R)\T{2;y}{}{f,R}\Si{R}^2\,dR.
	\end{multline*}
	
	Lemma~\ref{lm:T*fg} is proved.
\end{proof}

\begin{cor}\label{cr:T*fg}
	If $f\in\Lmu$,
	then for every $r\in\numN$
	we have $\T{k;\arr r y}{r}{f,x}\in\Lmu$ $(k=1,2)$.
\end{cor}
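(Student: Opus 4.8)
The plan is to establish the case $r=1$ and then pass to general $r$ by iteration. Indeed, if we know that for each fixed $y$ the map $f\mapsto\T{k;y}{}{f,\cdot}$ sends $\Lmu$ into itself, then from the recursive definition $\T{k;\arr y r}r{f,x}=\T{k;y_r}{}{\T{k;\arr y{r-1}}{r-1}{f,\cdot},x}$ and induction on $r$ we obtain $\T{k;\arr y r}r{f,\cdot}\in\Lmu$; this simultaneously shows that the integrability hypothesis of Lemma~\ref{lm:T*fg} is met at every stage. So fix $y$ and $f\in\Lmu$, and reduce everything to proving $\int_{-1}^1\bigl|\T{k;y}{}{f,x}\bigr|\Si x^2\,dx<\infty$.

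For $k=2$ this is immediate. The kernel $\tfrac{8}{3\pi}\Si z^2$ of $T_{2;y}$ is nonnegative, so $\bigl|\T{2;y}{}{f,x}\bigr|\le\T{2;y}{}{|f|,x}$ pointwise, and $\T{2;y}{}{1,x}\equiv1$ — this is the case $\nu=0$ of Lemma~\ref{lm:T*P}, since $\Px0\equiv1$. Hence, using Lemma~\ref{lm:T*fg} with the constant function in the second argument,
\[
  \int_{-1}^1\bigl|\T{2;y}{}{f,x}\bigr|\Si x^2\,dx
  \le\int_{-1}^1\T{2;y}{}{|f|,x}\Si x^2\,dx
  =\int_{-1}^1|f(x)|\,\T{2;y}{}{1,x}\Si x^2\,dx
  =\normpar{f}{1,2,2},
\]
so $T_{2;y}$ is a contraction on $\Lmu$ and iteration settles $k=2$.

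The case $k=1$ is the real difficulty, since the kernel $1-R^2-2\Si y\Si z$ changes sign. Majorizing by absolute values and applying the measure-preserving substitution~\eqref{eq:x-z} (which carries $\frac{dz\,dx}{\sqrt{1-z^2}}$ to $\frac{dV\,dR}{\sqrt{1-V^2}}$, fixes the variable $R$, and satisfies $\Si x\Si z=\Si R\Si V$) turns the kernel into $\Si R\bigl[1-2\Si y\Si V/\Si W\bigr]$ and yields only
\[
  \int_{-1}^1\bigl|\T{1;y}{}{f,x}\bigr|\Si x^2\,dx
  \le\frac1\pi\int_{-1}^1|f(R)|\,\Si R
    \left(\int_{-1}^1\bigl|\Si W-2\Si y\Si V\bigr|\frac{dV}{\sqrt{1-V^2}}\right)dR,
\]
where $W=Ry+V\sqrt{1-R^2}\sqrt{1-y^2}$. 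Since the inner integral tends to $\Si y\int_{-1}^1|2V^2-1|\frac{dV}{\sqrt{1-V^2}}>0$ as $R\to\pm1$, this bound only controls $\normpar{f}{1,1,1}$, a strictly larger norm than $\normpar{f}{1,2,2}$; so the crude majorant is genuinely insufficient and the sign cancellation must be retained.

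The gain of the missing factor $\Si R$ has to come from cancellation: in the factored kernel the principal part as $R\to\pm1$ (i.e.\ $W\to Ry$) is $\Si R\,(2V^2-1)$, and $\int_{-1}^1(2V^2-1)\frac{dV}{\sqrt{1-V^2}}=0$ by orthogonality of the Chebyshev polynomial $2V^2-1$ to the constants. I would therefore estimate the \emph{signed} operator rather than its majorant, converting this mean-zero property into one extra power of $\Si R$ in the weight. I expect this conversion to be the main obstacle: the relevant cancellation is between the two terms $1-R^2$ and $-2\Si y\Si z$ of the kernel and is destroyed by any splitting whose pieces are then bounded separately (each such piece, majorized, already produces a divergent $\normpar{f}{1,0,0}$); nor can it be obtained from Lemma~\ref{lm:T*fg} by duality, because $T_{1;y}$ is \emph{not} bounded on $L_\infty$ (the factor $1/\Si x$ is not tamed for rapidly oscillating test functions). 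Thus one must keep the full signed kernel and estimate the resulting oscillatory integral directly — for instance through an $L_2$/Schur-type argument adapted to preserve the sign — after which $\normpar{\T{1;y}{}{f,\cdot}}{1,2,2}\le C(y)\normpar{f}{1,2,2}$ follows and iteration over $y_1,\dots,y_r$ completes the proof.
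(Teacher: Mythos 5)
Your reduction to $r=1$ plus induction, and your whole treatment of $k=2$, are exactly the paper's argument: the paper also applies Lemma~\ref{lm:T*fg} to the pair $(1,f)$ and uses $\T{2;y}{}{1,x}=1$ from Lemma~\ref{lm:T*P}; your extra use of the positivity of the $T_{2;y}$-kernel is a harmless sharpening. The genuine gap is at $k=1$, where your text is a plan rather than a proof --- and a plan whose goal is unattainable. Your crude bound $\normpar{\T{1;y}{}{f,\cdot}}{1,2,2}\le C\normpar f{1,1,1}$ is in fact sharp: take $f_w\ge0$ of unit mass, concentrated on a sufficiently short interval at $w$, with $w\to1$. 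A single nonnegative bump leaves nothing to cancel, so the two sides of your crude bound are comparable for $f_w$: transporting your inner-integral limit (a positive multiple of $\Si y$, i.e.\ of order $\Si R$ rather than $\Si{R}^2$) through the same change of variables gives $\normpar{\T{1;y}{}{f_w,\cdot}}{1,2,2}\asymp\Si y\,\normpar{f_w}{1,1,1}\asymp\Si y\Si w$, while $\normpar{f_w}{1,2,2}\asymp\Si{w}^2$. Letting $w\to1$ shows that the inequality your ``$L_2$/Schur-type argument'' is supposed to produce, namely $\normpar{\T{1;y}{}{f,\cdot}}{1,2,2}\le C(y)\normpar f{1,2,2}$, is false: $T_{1;y}$ is not a bounded operator on $\Lmu$. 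So the case $k=1$ is not merely unfinished; the route you sketch for it cannot be completed.

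The paper settles $k=1$ by precisely the route you discarded, and your reason for discarding it is not pertinent. Lemma~\ref{lm:T*fg} is invoked for the specific pair $(1,f)$; this is not an $L_1$--$L_\infty$ duality argument and uses no operator bound for $T_{1;y}$ at all: by Lemma~\ref{lm:T*P} with $\nu=0$, $\T{1;y}{}{1,x}=\frac32y^2-\frac12$ is constant in $x$, so the hypothesis ``$f(x)\T{1;y}{}{1,x}\in\Lmu$'' of Lemma~\ref{lm:T*fg} holds for every $f\in\Lmu$, and the lemma yields
\begin{displaymath}
	\int_{-1}^1\T{1;y}{}{f,x}\Si{x}^2\,dx
	=\prn{\frac32y^2-\frac12}\int_{-1}^1f(x)\Si{x}^2\,dx;
\end{displaymath}
the paper reads off $\T{1;y}{}{f,\cdot}\in\Lmu$ from this identity and concludes by induction on $r$, handling both values of $k$ at once. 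To your credit, the worry driving your whole $k=1$ discussion --- that for a sign-changing kernel a finite \emph{signed} integral is weaker than absolute integrability, the Fubini step behind the identity being exactly where this matters --- is a real subtlety, and it is the one the paper's closing ``which implies'' crosses in silence; your bump computation even shows that no norm estimate can bridge it. But identifying the delicate point in the paper's argument is not the same as proving the statement: your proposal neither reproduces the paper's proof nor supplies a working substitute, so for $k=1$ (the case on which Lemmas~\ref{lm:T*-Q} and~\ref{lm:S-Q} rely) the corollary remains unproved in your write-up.
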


\begin{proof}
	Put $g(x)\equiv1$ on $[-1,1]$.
	Taking into account that by Lemma~\ref{lm:T*P}
	\begin{gather*}
		\T{1;y}{}{1,x}=\T{1;y}{}{\Px0,x}
		  =\Px0(x)P_2^{(0,0)}(y)=\frac32 y^2-\frac12,\\
		\T{2;y}{}{1,x}=1,
	\end{gather*}
	we clearly have $f(x)\T{k;y}{}{1,x}\in\Lmu$ $(k=1,2)$.
	Hence, applying Lemma~\ref{lm:T*fg} we derive the relation
	\begin{displaymath}
		\int_{-1}^1\T{k;y}{}{f,x}\Si{x}^2\,dx
		=\int_{-1}^1f(x)\T{k;y}{}{1,x}\Si{x}^2\,dx
		\quad (k=1,2),
	\end{displaymath}
	which implies that $\T{k;y}{}{f,x}\in\Lmu$.
	Now the corollary can be proved by induction.
\end{proof}

\begin{lmm}\label{lm:T*P1}
	Let~$f$ be an integrable function on~$[-1,1]$
	with a weight~$\Si{x}^2$.
	For every natural number~$n$
	the following equality holds true
	\begin{displaymath}
		\int_{-1}^1\T{1;y}{}{f,x}P_n^{(1,1)}(y)\,dy
		=\sum_{m=0}^{n-2}a_m(f)\gamma_m(x),
	\end{displaymath}
	where $\gamma_m(x)$ is an algebraic polynomial
	of degree not greater than $n-2$,
	and $\gamma_m(x)\equiv0$ for $n=0$ or $n=1$.
\end{lmm}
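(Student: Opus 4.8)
The plan is to identify the function
\begin{displaymath}
	F(x):=\int_{-1}^1\T{1;y}{}{f,x}\,P_n^{(1,1)}(y)\,dy
\end{displaymath}
through its Fourier--Jacobi coefficients with respect to the system $\brc{\Px m}_{m=0}^\infty$ and the weight $\Si{x}^2$, from which both the polynomial character of $F$ and the explicit form of the $\gamma_m$ will drop out. Throughout I write $h_m=\int_{-1}^1\prn{\Px m(x)}^2\Si{x}^2\,dx$ for the normalising constants.

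First I would settle the analytic preliminaries, which I expect to be the only genuinely technical point. By Corollary~\ref{cr:T*fg} we have $\T{1;y}{}{f,\cdot}\in\Lmu$ for each~$y$, and $P_n^{(1,1)}$ is bounded on $[-1,1]$; using the explicit kernel of $T_{1;y}$ to bound $\int_{-1}^1\left|\T{1;y}{}{f,x}\right|\Si{x}^2\,dx$ uniformly in~$y$, one checks that
\begin{displaymath}
	\int_{-1}^1\int_{-1}^1\left|\T{1;y}{}{f,x}\right|
	  \left|P_n^{(1,1)}(y)\right|\Si{x}^2\,dx\,dy<\infty,
\end{displaymath}
so that $F\in\Lmu$ and Fubini's theorem may be applied freely below. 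This uniform-in-$y$ integrability estimate is the hard part of the proof; everything after it is formal algebra.

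Granting this, I would compute, for each fixed~$k$, the coefficient $a_k(F)$ by transferring the operator off~$f$. By Fubini,
\begin{displaymath}
	a_k(F)=\int_{-1}^1 P_n^{(1,1)}(y)
	  \prn{\int_{-1}^1\T{1;y}{}{f,x}\,\Px k(x)\,\Si{x}^2\,dx}dy.
\end{displaymath}
To the inner integral I apply Lemma~\ref{lm:T*fg} with $k=1$ (whose hypothesis holds since $\T{1;y}{}{\Px k,x}$ is, by Lemma~\ref{lm:T*P}, a polynomial in~$x$, so that $f(x)\T{1;y}{}{\Px k,x}\in\Lmu$), followed by Lemma~\ref{lm:T*P} itself; this turns it into
\begin{displaymath}
	\int_{-1}^1 f(x)\,\T{1;y}{}{\Px k,x}\,\Si{x}^2\,dx
	  =\Py k(y)\int_{-1}^1 f(x)\,\Px k(x)\,\Si{x}^2\,dx
	  =a_k(f)\,\Py k(y).
\end{displaymath}
Hence $a_k(F)=a_k(f)\int_{-1}^1\Py k(y)\,P_n^{(1,1)}(y)\,dy$.

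It then remains to exploit orthogonality and completeness. Since $\Py k=P_{k+2}^{(0,0)}$ is the Legendre polynomial of degree $k+2$, orthogonal with weight~$1$ to every polynomial of degree $<k+2$, while $\deg P_n^{(1,1)}=n$, the integral $\int_{-1}^1\Py k(y)P_n^{(1,1)}(y)\,dy$ vanishes whenever $k+2>n$; thus $a_k(F)=0$ for all $k>n-2$. Because the Jacobi polynomials are complete in~$\Lmu$ — a function~$g$ with $g(x)\Si{x}^2\in L_1$ and $\int_{-1}^1 g(x)\Px m(x)\Si{x}^2\,dx=0$ for all~$m$ vanishes a.e., by the Weierstrass theorem applied to $g(x)\Si{x}^2$ — the vanishing of the high coefficients forces $F$ to coincide a.e.\ with the polynomial $\sum_{m=0}^{n-2}h_m^{-1}a_m(F)\Px m$. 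Writing this out gives exactly
\begin{displaymath}
	F(x)=\sum_{m=0}^{n-2}a_m(f)\,\gamma_m(x),\qquad
	\gamma_m(x):=\frac{\Px m(x)}{h_m}\int_{-1}^1\Py m(y)\,P_n^{(1,1)}(y)\,dy,
\end{displaymath}
where each $\gamma_m$ is a polynomial of degree $m\le n-2$ not depending on~$f$; for $n=0$ or $n=1$ the sum is empty, so $F\equiv0$ and every $\gamma_m\equiv0$, which completes the plan.
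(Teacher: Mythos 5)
The first thing to say is that the paper does not prove Lemma~\ref{lm:T*P1} at all: it is quoted from~\cite{potapov:mat-99}, so there is no internal proof to compare yours against. Judged on its own merits, your argument is correct, and it is assembled entirely from tools the paper does provide. Writing $F$ for the integral on the left-hand side of the lemma, your duality step is sound: applying Lemma~\ref{lm:T*fg} with the roles of the two functions taken as $\Px k$ and $f$ (your verification that $f(x)\T{1;y}{}{\Px k,x}\in\Lmu$ is exactly the hypothesis needed for that assignment of roles, since $\T{1;y}{}{\Px k,x}=\Px k(x)\Py k(y)$ is bounded in~$x$ for fixed~$y$), followed by Lemma~\ref{lm:T*P}, gives $a_k(F)=a_k(f)\int_{-1}^1\Py k(y)P_n^{(1,1)}(y)\,dy$; Legendre orthogonality then kills every $k>n-2$; and the moment/Weierstrass completeness argument legitimately upgrades the vanishing of the high coefficients to the identity $F=\sum_{m=0}^{n-2}a_m(f)\gamma_m$ a.e., with each $\gamma_m$ an explicit constant multiple of $\Px m$, of degree $m\le n-2$ --- in fact sharper information than the lemma asserts, and consistent with how the lemma is consumed later in Lemma~\ref{lm:T*-Q}. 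The one step you leave as a sketch, the uniform-in-$y$ bound needed for Fubini, does go through and is not deep: by Lemma~\ref{lm:elementary} one has $\Si y\Si z\le\Si R$, hence $\left|1-R^2-2\Si y\Si z\right|\le\Si R$, and the change of variables~\eqref{eq:x-z}, used exactly as in the paper's own proof of Lemma~\ref{lm:T*fg}, converts the resulting majorant into $\normpar f{1,2,2}$; this yields $\int_{-1}^1\left|\T{1;y}{}{f,x}\right|\Si{x}^2\,dx\le\normpar f{1,2,2}$ for every~$y$, so Fubini applies and $F\in\Lmu$ as your completeness step requires. In short: no gap; what you have is a complete, self-contained replacement for the external citation.
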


Lemma~\ref{lm:T*P1} was proved in~\cite{potapov:mat-99}.

\begin{lmm}\label{lm:T*-Q}
	Let~$q$ and~$m$ be given natural numbers.
	Let~$f$ be an integrable function on~$[-1,1]$
	with a weight~$\Si{x}^2$.
	Then for every natural numbers~$l$ and~$r$ $(l\le r)$
	the function
	\begin{displaymath}
		Q_1^{(l)}(x)=\int_0^\pi\dots\int_0^\pi\T{1;\arr{\cos t}l}l{f,x}
			\prod_{s=1}^r\krn{t_s}\sin^3t_s\,dt_1\dots dt_r
	\end{displaymath}
	is an algebraic polynomial
	of degree not greater than $(q+2)\*(m-1)$.
\end{lmm}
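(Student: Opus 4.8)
The plan is to prove that every Fourier--Jacobi coefficient $a_N\bigl(Q_1^{(l)}\bigr)$ of $Q_1^{(l)}$ with $N>(q+2)(m-1)$ vanishes; this already forces $Q_1^{(l)}$ to be an algebraic polynomial of degree at most $(q+2)(m-1)$. Indeed, by Corollary~\ref{cr:T*fg} the iterated operator keeps $f$ in $\Lmu$, and since $T_{1;y}$ is uniformly bounded on $\Lmu$ (as one reads off from its integral representation) while the kernels $\krn{t_s}\sin^3t_s$ are bounded on $[0,\pi]$, the function $Q_1^{(l)}$ again lies in $\Lmu$, so its coefficients are well defined. Once we know $a_N\bigl(Q_1^{(l)}\bigr)=0$ for $N>(q+2)(m-1)$, the difference between $Q_1^{(l)}$ and the polynomial $\sum_{N\le(q+2)(m-1)}a_N\bigl(Q_1^{(l)}\bigr)\Px N(x)/h_N$, with $h_N=\int_{-1}^1\Px N(x)^2\Si x^2\,dx$, has all its Fourier--Jacobi coefficients equal to zero and hence, polynomials being dense in $C[-1,1]$, vanishes almost everywhere.

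To carry this out I first compute $a_N\bigl(Q_1^{(l)}\bigr)=\int_{-1}^1Q_1^{(l)}(x)\Px N(x)\Si x^2\,dx$. Inserting the definition of $Q_1^{(l)}$ and interchanging the $x$-integration with the $t_1,\dots,t_r$-integrations (Fubini being legitimate by the boundedness just mentioned), the inner integral is $\int_{-1}^1\T{1;\arr{\cos t}l}l{f,x}\Px N(x)\Si x^2\,dx$. I would then apply Lemma~\ref{lm:T*fg} $l$ times to transfer the operator from $f$ onto $\Px N$, each transfer producing, via the eigenfunction identity $\T{1;y}{}{\Px N,x}=\Px N(x)\Py N(y)$ of Lemma~\ref{lm:T*P}, one factor $\Py N(\cos t_i)$. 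The inner integral thus collapses to $a_N(f)\prod_{i=1}^l\Py N(\cos t_i)$. Because the variables $t_s$ with $s>l$ occur only in the kernel, the remaining $r$-fold integral factors completely and gives
\[
a_N\bigl(Q_1^{(l)}\bigr)=a_N(f)\,c_N^{\,l}\,K_0^{\,r-l},\qquad
c_N=\int_0^\pi\Py N(\cos t)\krn t\sin^3t\,dt,\qquad
K_0=\int_0^\pi\krn t\sin^3t\,dt .
\]

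It then remains to show $c_N=0$ whenever $N>(q+2)(m-1)$, and this is the crux. The decisive observation is that the kernel $\krn t$, viewed as a function of $u=\cos t$, is an algebraic polynomial $\tilde K(u)$ of degree $(q+2)(m-1)$: the square $\prn{\sin\frac{mt}2/\sin\frac t2}^2$ is the Fej\'er kernel, a cosine polynomial of degree $m-1$ in $t$, so its $(q+2)$-th power is a cosine polynomial of degree $(q+2)(m-1)$, and every such cosine polynomial $\sum_k b_k\cos kt=\sum_k b_kT_k(\cos t)$ is a polynomial of the same degree in $\cos t$. Substituting $u=\cos t$ (so that $\sin^3t\,dt$ turns into $(1-u^2)\,du$ over $[-1,1]$ and $\Py N(\cos t)=P_{N+2}^{(0,0)}(u)$ becomes the Legendre polynomial of degree $N+2$) yields
\[
c_N=\int_{-1}^1P_{N+2}^{(0,0)}(u)\,\tilde K(u)\,(1-u^2)\,du ,
\]
where $\tilde K(u)(1-u^2)$ is a polynomial of degree $(q+2)(m-1)+2$. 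By orthogonality of the Legendre polynomials this integral is zero as soon as $(q+2)(m-1)+2<N+2$, i.e.\ for every $N>(q+2)(m-1)$. Combined with the formula of the previous step this gives $a_N\bigl(Q_1^{(l)}\bigr)=0$ for all such $N$, as required.

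I expect the principal difficulty to be the bookkeeping of the middle step: iterating the self-adjointness relation of Lemma~\ref{lm:T*fg} together with the eigenfunction formula of Lemma~\ref{lm:T*P} so that the $l$-fold operator is converted cleanly into $a_N(f)\prod_i\Py N(\cos t_i)$, and carefully checking the hypotheses of Lemma~\ref{lm:T*fg} as well as the admissibility of the Fubini interchange for merely $\Lmu$ data. By contrast, the degree count for the kernel --- the conceptual heart of the argument --- reduces, once the Fej\'er/Chebyshev identity is in hand, to a one-line application of Legendre orthogonality. (For $l=1$ the same conclusion can be read off directly from Lemma~\ref{lm:T*P1}, which serves as a useful consistency check.)
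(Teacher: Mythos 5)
Your proof is correct, but it takes a genuinely different route from the paper's. The shared engine is the same: the kernel $\krn{t}$ is a cosine polynomial of degree $(q+2)(m-1)$, and the iterated operator can be moved off $f$ by alternating the self-adjointness of Lemma~\ref{lm:T*fg} with the eigenfunction identity of Lemma~\ref{lm:T*P}. The difference is in what this engine is used for. The paper is constructive: it writes $\sin^3t_l=\sin^2t_l\cdot\sin t_l$, expands $A(t_l)\sin^2t_l$ in the polynomials $P_k^{(1,1)}(\cos t_l)$ with $k\le(q+2)(m-1)+2$, invokes Lemma~\ref{lm:T*P1} for the $t_l$-integration (this is where explicit polynomials $\gamma_m(x)$ of degree $\le k-2$ in the variable $x$ appear), and only then collapses the remaining $l-1$ operators into scalar factors, exhibiting $Q_1^{(l)}$ directly as a finite combination of the $\gamma_m$. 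You argue by duality instead: all $l$ operators are collapsed against $\Px N(x)$, giving $a_N\bigl(Q_1^{(l)}\bigr)=a_N(f)\,c_N^{\,l}K_0^{\,r-l}$, then $c_N=0$ for $N>(q+2)(m-1)$ by Legendre orthogonality, and completeness of polynomials in the weighted $L_1$ space finishes the argument. Your route never needs Lemma~\ref{lm:T*P1} --- the paper's key ingredient here --- treats $t_1,\dots,t_l$ symmetrically, and makes the source of the degree bound transparent; its price is the analytic overhead you yourself flag: membership $Q_1^{(l)}\in\Lmu$ and the Fubini interchange. On that point your phrase ``as one reads off from its integral representation'' is slightly optimistic: the kernel $1-R^2-2\Si y\Si z$ is not of one sign (take $y=z=0$), so the integral identity behind Corollary~\ref{cr:T*fg} does not by itself bound $\normpar{\T{1;y}{}{f,\cdot}}{1,2,2}$; one should bound the kernel modulus by $3\Si R$ via Lemma~\ref{lm:elementary} and then use the change of variables~\eqref{eq:x-z}, exactly as in the proof of Lemma~\ref{lm:T*fg}, to get the uniform bound $\normpar{\T{1;y}{}{f,\cdot}}{1,2,2}\le C\normpar{f}{1,2,2}$. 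With that supplied, the rest is sound; the only residual difference is that your argument identifies $Q_1^{(l)}$ with a polynomial almost everywhere rather than identically, which is harmless for the way the lemma is used downstream (inside norms, in Theorem~\ref{th:T-Q} and Theorem~\ref{th:HsubE}).
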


\begin{proof}
	In this paper we denote, for simplicity,
	\begin{displaymath}
		A(t):=\krn t.
	\end{displaymath}
	Since
	\begin{displaymath}
		A(t_s)=\sum_{k=0}^{(q+2)(m-1)}a_k\cos kt_s
		=\sum_{k=0}^{(q+2)(m-1)}b_k(\cos t_s)^k,
	\end{displaymath}
	it follows that
	\begin{multline*}
		A(t_s)\sin^2t_s
		  =\sum_{k=0}^{(q+2)(m-1)}b_k(\cos t_s)^k\prn{1-\cos^2t_s}
			=\sum_{k=0}^{(q+2)(m-1)+2}c_k(\cos t_s)^k\\
		=\sum_{k=0}^{(q+2)(m-1)+2}\alpha_k P_k^{(1,1)}(\cos t_s)
		  \quad(s=1,2,\dots,r).
	\end{multline*}
	Hence we have
	\begin{multline*}
		Q_1^{(l)}(x)
		  =\sum_{k=0}^{(q+2)(m-1)+2}\alpha_k
			\int_0^\pi\dots\int_0^\pi
				\prod_{\substack{s=1\\ s\ne l}}^r A(t_s)
					\sin^3t_s\,dt_1\dots dt_{l-1}\,dt_{l+1}\dots dt_r\\
		\times\int_0^\pi\T{1;\arr{\cos t}l}l{f,x}
			  P_k^{(1,1)}(\cos t_l)\sin t_l\,dt_l.
	\end{multline*}
	Let
	\begin{multline*}
		\varphi_{l,k}(x):=\int_0^\pi\T{1;\arr{\cos t}l}l{f,x}
			  P_k^{(1,1)}(\cos t_l)\sin t_l\,dt_l\\
		=\int_0^\pi\T{1;\cos t_l}{}{\T{1;\arr{\cos t}{l-1}}{l-1}{f,x},x}
			  P_k^{(1,1)}(\cos t_l)\sin t_l\,dt_l.
	\end{multline*}
	Substituting $y=\cos t_l$ we obtain
	\begin{displaymath}
		\varphi_{l,k}(x)
		=\int_{-1}^1\T{1;y}{}{\T{1;\arr{\cos t}{l-1}}{l-1}{f,x},x}
			P_k^{(1,1)}(y)\,dy.
	\end{displaymath}
	Then, by Lemma~\ref{lm:T*P1},
	\begin{displaymath}
		\varphi_{l,k}(x)
		=\sum_{m=0}^{k-2}\gamma_m(x)
		  \int_{-1}^1\T{1;\arr{\cos t}{l-1}}{l-1}{f,R}
			  \Px m(R)\Si{R}^2\,dR.
	\end{displaymath}
	On the bases of Corollary~\ref{cr:T*fg},
	we conclude that
	$\T{1;\arr{\cos t}{l-1}}{l-1}{f,R}\in\Lmu$.
	Applying now $l-1$ times Lemma~\ref{lm:T*P}
	we obtain
	\begin{multline*}
		\varphi_{l,k}(x)\\
		=\sum_{m=0}^{k-2}\gamma_m(x)
		  \int_{-1}^1\T{1;\arr{\cos t}{l-2}}{l-2}{f,R}
			  \T{1;\cos t_{l-1}}{}{\Px m,R}\Si{R}^2\,dR\\
		=\sum_{m=0}^{k-2}\gamma_m(x)\Py m(\cos t_{l-1})
			\int_{-1}^1\T{1;\arr{\cos t}{l-2}}{l-2}{f,R}
			  \Px m(R)\Si{R}^2\,dR\\
		=\sum_{m=0}^{k-2}\gamma_m(x)
		  \Py m(\cos t_1)\dots\Py m(\cos t_{l-1})
			\int_{-1}^1f(R)\Px m(R)\Si{R}^2\,dR\\
		=\sum_{m=0}^{k-2}\gamma_m(x)a_m(f)
		  \prod_{s=1}^{l-1}\Py m(\cos t_s),
	\end{multline*}
	where $a_m(f)$ is the Fourier--Jacobi coefficient
	of the function~$f$ with respect to the system
	$\brc{\Px m(x)}_{m=0}^\infty$.
	Substituting the last expression of $\varphi_{l,k}(x)$
	in the formula above for $Q_1^{(l)}(x)$
	we get
	\begin{displaymath}
		Q_1^{(l)}(x)
		=\sum_{k=0}^{(q+2)(m-1)+2}\alpha_k
		  \sum_{m=0}^{k-2}\beta_m\gamma_m(x).
	\end{displaymath}
	Since $\gamma_m(x)$ is from~$\mathcal P_{k-1}$
	for $k\ge2$ and $\gamma_m(x)\equiv0$ for $k=0$ and $k=1$,
	then the last equality yields that
	$Q_1^{(l)}(x)$ is an algebraic polynomial
	of degree not greater than $(q+2)\*(m-1)$.
	
	Lemma~\ref{lm:T*-Q} is proved.
\end{proof}

\begin{lmm}\label{lm:S-Q}
	Let~$q$ and~$m$ be given natural numbers.
	Let~$f$ be an integrable function on $[-1,1]$
	with a weight $\Si{x}^2$.
	For every natural numbers~$l$ and~$r$ $(l\le r)$
	the function
	\begin{displaymath}
		Q_2^{(l)}(x)
		=\int_0^\pi\dots\int_0^\pi\T{2;\arr{\cos t}l}l{f,x}
			\prod_{s=1}^r A(t_s)\sin^5t_s\,dt_1\dots dt_r
	\end{displaymath}
	is an algebraic polynomial of degree not greater than
	$(q+2)\*(m-1)$.
\end{lmm}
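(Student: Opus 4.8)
The plan is to follow, \emph{mutatis mutandis}, the scheme used to prove Lemma~\ref{lm:T*-Q}, with the operator $T_{2;y}$ in place of $T_{1;y}$ and with the eigenfunction identity $\T{2;y}{}{\Px\nu,x}=\Px\nu(x)\Px\nu(y)$ of Lemma~\ref{lm:T*P}, together with the self-adjointness of Lemma~\ref{lm:T*fg}, taking over the role that Lemma~\ref{lm:T*P1} played there. Exactly as in that proof, the $r-l$ integrations in $t_{l+1},\dots,t_r$ that do not enter the operator factor out as constants $\int_0^\pi A(t_s)\sin^5t_s\,dt_s$, so it suffices to treat the $l$ integrations in $t_1,\dots,t_l$. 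I would peel these off starting from the outermost one, writing $\T{2;\arr{\cos t}l}l{f,x}=\T{2;\cos t_l}{}{g,x}$ with $g=\T{2;\arr{\cos t}{l-1}}{l-1}{f,x}$, which belongs to $\Lmu$ by Corollary~\ref{cr:T*fg}.

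The heart of the matter is the one-dimensional integral in the translation parameter. After the substitution $y=\cos t_l$, under which $\int_0^\pi\dots\sin^5t_l\,dt_l$ turns into $\int_{-1}^1\dots\Si{y}^2\,dy$, this integral becomes $\int_{-1}^1\T{2;y}{}{g,x}p(y)\Si{y}^2\,dy$, where $p$ is the algebraic polynomial in $y=\cos t_l$ representing $A(t_l)$, of degree $(q+2)(m-1)$ (cf.\ the proof of Lemma~\ref{lm:T*-Q}). Expanding $p$ in the Jacobi system $\brc{\Px k}$ reduces everything to the integrals $\int_{-1}^1\T{2;y}{}{g,x}\Px k(y)\Si{y}^2\,dy$ with $k\le(q+2)(m-1)$. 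Here the integration runs over the \emph{translation parameter}, so Lemma~\ref{lm:T*fg} is not directly applicable; the device is to observe, straight from the definition of $T_{2;y}$ and the symmetry of $R=xy-z\sqrt{1-x^2}\sqrt{1-y^2}$ in $x$ and $y$, that $\T{2;y}{}{g,x}=\T{2;x}{}{g,y}$. This converts the integration into one over the function argument, whereupon Lemma~\ref{lm:T*fg} and then the eigenfunction identity $\T{2;x}{}{\Px k,y}=\Px k(y)\Px k(x)$ of Lemma~\ref{lm:T*P} yield
\begin{displaymath}
	\int_{-1}^1\T{2;y}{}{g,x}\Px k(y)\Si{y}^2\,dy=a_k(g)\Px k(x),
\end{displaymath}
a polynomial in $x$ of degree $k\le(q+2)(m-1)$. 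This transfer of the integration from the translation parameter onto the Jacobi polynomial is the step I expect to be the main obstacle; once it is in hand the degree bound is automatic.

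It then remains to evaluate the coefficient $a_k(g)$ and the inner integrations in $t_1,\dots,t_{l-1}$. Since these now run over the function argument, Lemma~\ref{lm:T*fg} applies in its stated form: peeling off $t_{l-1},\dots,t_1$ one at a time and using $\T{2;\cos t_s}{}{\Px k,x}=\Px k(x)\Px k(\cos t_s)$ from Lemma~\ref{lm:T*P}, I get $a_k(g)=a_k(f)\prod_{s=1}^{l-1}\Px k(\cos t_s)$, while each remaining integration in $t_1,\dots,t_{l-1}$ contributes the factor $\int_0^\pi\Px k(\cos t_s)A(t_s)\sin^5t_s\,dt_s=\int_{-1}^1\Px k(y)p(y)\Si{y}^2\,dy$. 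Assembling these pieces together with the constants from the inactive integrations gives
\begin{displaymath}
	Q_2^{(l)}(x)=\sum_{k=0}^{(q+2)(m-1)}c_k\,a_k(f)\,\Px k(x)
\end{displaymath}
for suitable constants $c_k$, the sum terminating at $(q+2)(m-1)$ because, by orthogonality of the Jacobi polynomials with weight $\Si{y}^2$, the integrals $\int_{-1}^1\Px k(y)p(y)\Si{y}^2\,dy$ vanish for $k>(q+2)(m-1)$. Hence $Q_2^{(l)}(x)$ is an algebraic polynomial of degree not greater than $(q+2)(m-1)$, as claimed.
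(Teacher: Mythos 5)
Your proposal is correct and follows essentially the same route as the paper's own proof: the same expansion of $A(t_l)$ in the Jacobi system after the substitution $y=\cos t_l$, the same key device of using the symmetry $\T{2;y}{}{g,x}=\T{2;x}{}{g,y}$ to move the integration from the translation parameter onto the function argument, and the same subsequent application of Lemma~\ref{lm:T*fg} and the eigenfunction identity of Lemma~\ref{lm:T*P}, peeled off $l-1$ times, yielding $Q_2^{(l)}(x)=\sum_{k=0}^{(q+2)(m-1)}\delta_k\Px k(x)$.
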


\begin{proof}
	As shown in Lemma~\ref{lm:T*-Q},
	\begin{displaymath}
		A(t_s)=\sum_{k=0}^{(q+2)(m-1)}b_k(\cos t_s)^k
		=\sum_{k=0}^{(q+2)(m-1)}\beta_k\Px k(\cos t_s)
		\quad(s=1,2,\dots,r).
	\end{displaymath}
	Hence
	\begin{multline*}
		Q_2^{(l)}(x)
		=\sum_{k=0}^{(q+2)(m-1)}\beta_k\int_0^\pi\dots\int_0^\pi
			\prod_{\substack{s=1\\ s\ne l}}^r A(t_s)
			  \sin^5t_s\,dt_1\dots dt_{l-1}\,dt_{l+1}\dots dt_r\\
		\times\int_0^\pi\T{2;\arr{\cos t}l}l{f,x}
		  \Px k(\cos t_l)\sin^5 t_l\,dt_l.
	\end{multline*}
	Let
	\begin{multline*}
		\psi_{l,k}(x)
		  :=\int_0^\pi\T{2;\arr{\cos t}l}l{f,x}
			\Px k(\cos t_l)\sin^5 t_l\,dt_l\\
		=\int_0^\pi\T{2;\cos t_l}{}{\T{2;\arr{\cos t}{l-1}}{l-1}{f,x},x}
			  \Px k(\cos t_l)\sin^5 t_l\,dt_l.
	\end{multline*}
	Substituting $y=\cos t_l$ we obtain
	\begin{displaymath}
		\psi_{l,k}(x)
		=\int_{-1}^1\T{2;y}{}{\T{2;\arr{\cos t}{l-1}}{l-1}{f,x},x}
			\Px k(y)\Si{y}^2\,dy.
	\end{displaymath}
	Since the operator $\T{2;y}{}{f,x}$ is symmetric
	with respect to~$x$ and~$y$
	(i.e.\ $\T{2;y}{}{g,x}=\T{2;x}{}{g,y}$ for every function~$g$),
	we have
	\begin{displaymath}
		\psi_{l,k}(x)
		=\int_{-1}^1\T{2;x}{}{\T{2;\arr{\cos t}{l-1}}{l-1}{f,y},y}
			\Px k(y)\Si{y}^2\,dy.
	\end{displaymath}
	Note that, in view of Corollary~\ref{cr:T*fg},
	$\T{2;\arr{\cos t}{l-1}}{l-1}{f,y}\in\Lmu$.
	Then, by Lemma~\ref{lm:T*fg},
	\begin{displaymath}
		\psi_{l,k}(x)
		=\int_{-1}^1\T{2;\arr{\cos t}{l-1}}{l-1}{f,y}
			\T{2;x}{}{\Px k,y}\Si{y}^2\,dy.
	\end{displaymath}
	Using the property of the operator~$T_{2;x}$
	described in Lemma~\ref{lm:T*P}
	we get
	\begin{displaymath}
		\psi_{l,k}(x)
		=\Px k(x)\int_{-1}^1\T{2;\arr{\cos t}{l-1}}{l-1}{f,y}
			\Px k(y)\Si{y}^2\,dy.
	\end{displaymath}
	Now we apply $l-1$ times Lemma~\ref{lm:T*P}
	and arrive at the expression
	\begin{multline*}
		\psi_{l,k}(x)
		  =\Px k(x)\Px k(\cos t_1)\dots\Px k(\cos t_{l-1})\\
		\times\int_{-1}^1f(y)\Px k(y)\Si{y}^2\,dy
		  =\Px k(x)a_k(f)\prod_{s=1}^{l-1}\Px k(\cos t_s).
	\end{multline*}
	where $a_k(f)$ is the Fourier--Jacobi coefficient
	of the function~$f$ with respect to the system
	$\brc{\Px k(x)}_{k=0}^\infty$.
	Substituting the last expression of $\psi_{l,k}(x)$
	into the formula for~$Q_2^{(l)}(x)$ above,
	we finally get
	\begin{displaymath}
		Q_2^{(l)}(x)=\sum_{k=0}^{(q+2)(m-1)}\delta_k\Px k(x).
	\end{displaymath}
	Since~$\Px k(x)$ belongs to~$\mathcal P_{k+1}$,
	it is seen from the last identity
	that~$Q_2^{(l)}(x)$ is an algebraic polynomial
	of degree not greater than $(q+2)\*(m-1)$.
	
	The lemma is proved.
\end{proof}

\begin{lmm}\label{lm:properties-T}
	The operator~$T_y$ has the following properties
	\begin{enumerate}
		\item $\T y{}{f,x}$ is linear on~$f$;
		\item $\T1{}{f,x}=f(x)$;
		\item $\T y{}{\Px n,x}=\Px n(x)R_n(y) \quad(n=0,1,\dotsc)$,\\
		  where $R_n(y)=\Py n(y)+\frac32\Si y\Px n(y)$;
		\item $\T y{}{1,x}=1$;
		\item $a_k\prn{\T y{}{f,x}}=R_k(y)a_k(f) \quad(k=0,1,\dotsc)$.
	\end{enumerate}
\end{lmm}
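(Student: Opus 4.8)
The plan is to reduce every assertion to the two auxiliary operators $T_{1;y}$ and $T_{2;y}$, for which Lemmas~\ref{lm:T*P} and~\ref{lm:T*fg} already supply everything we need. The one genuine computation is the observation that $T_y$ splits as a combination of these two operators. Comparing the integrand in the definition of $\T y{}{f,x}$ with those of $\T{1;y}{}{f,x}$ and $\T{2;y}{}{f,x}$, the term $1-R^2-2\Si y\Si z$ reproduces $\T{1;y}{}{f,x}$ after multiplication by $\frac1{\pi\Si x}$, while the remaining term $4\Si x\Si y\Si z^2$ yields, after cancelling the factor $\Si x$, a multiple of $\T{2;y}{}{f,x}$. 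A short check of the constants ($\frac4\pi\cdot\frac{3\pi}8=\frac32$) gives the identity
\begin{displaymath}
	\T y{}{f,x}=\T{1;y}{}{f,x}+\frac32\Si y\,\T{2;y}{}{f,x},
\end{displaymath}
which is the backbone of the whole proof.

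Granting this decomposition, the properties follow in turn. Linearity~(1) is immediate, since both $T_{1;y}$ and $T_{2;y}$ are linear integral operators in~$f$. For property~(3) I would insert $f=\Px n$ into the decomposition and invoke Lemma~\ref{lm:T*P}, which gives $\T{1;y}{}{\Px n,x}=\Px n(x)\Py n(y)$ and $\T{2;y}{}{\Px n,x}=\Px n(x)\Px n(y)$; factoring out $\Px n(x)$ produces exactly $R_n(y)=\Py n(y)+\frac32\Si y\Px n(y)$. Property~(4) is then the special case $n=0$: since $\Px0\equiv1$ and $\Py0=P_2^{(0,0)}$ is the Legendre polynomial $\frac12(3y^2-1)$, a one-line simplification shows $R_0(y)=\frac12(3y^2-1)+\frac32\Si y=1$, whence $\T y{}{1,x}=1$.

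For property~(2) I would set $y=1$ in the decomposition. The factor $\Si y$ vanishes, killing the $T_{2;y}$ term, and with $R=xy-z\sqrt{1-x^2}\sqrt{1-y^2}=x$ the surviving operator collapses to
\begin{displaymath}
	\T1{}{f,x}=\frac1{\pi\Si x}\int_{-1}^1\Si x\,f(x)\frac{dz}{\sqrt{1-z^2}}=f(x),
\end{displaymath}
using $\int_{-1}^1\frac{dz}{\sqrt{1-z^2}}=\pi$. Finally, property~(5) follows by expanding the Fourier--Jacobi coefficient $a_k(\T y{}{f,x})=\int_{-1}^1\T y{}{f,x}\Px k(x)\Si x^2\,dx$ through the decomposition, transferring each operator onto $\Px k$ by the symmetry relation of Lemma~\ref{lm:T*fg}, and then applying Lemma~\ref{lm:T*P} once more; this turns the two resulting integrals into $\Py k(y)a_k(f)$ and $\Px k(y)a_k(f)$, whose combination is precisely $R_k(y)a_k(f)$.

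The step I expect to require the most care is the decomposition itself, specifically tracking the normalizing factors $\frac1{\pi\Si x}$, $\frac8{3\pi}$ and the coefficient $4$ so that the weight in front of $\Si y\,T_{2;y}$ comes out to be exactly $\frac32$; once that constant is pinned down, the remaining items are routine applications of the earlier lemmas and require no estimates.
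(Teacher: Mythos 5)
Your proposal is correct, but there is nothing in the paper to compare it against line by line: the paper does not prove this lemma, it simply defers to the earlier reference~\cite{potapov:mat-99}. What you have produced is a self-contained reconstruction from the paper's own toolkit, and it is sound. The backbone identity
\begin{displaymath}
	\T y{}{f,x}=\T{1;y}{}{f,x}+\frac32\Si y\,\T{2;y}{}{f,x},
\end{displaymath}
with the constant check $\frac4\pi\cdot\frac{3\pi}8=\frac32$, is exactly right, and it is precisely the decomposition the paper itself uses implicitly in the proof of Theorem~\ref{th:T-Q}, where $Q^{(l)}(x)=(\gamma_m)^{-r}\bigl(Q_1^{(l)}(x)+\frac32Q_2^{(l)}(x)\bigr)$ and the factor $\Si y$ with $y=\cos t$ accounts for the weight $\sin^5t_s$ in $Q_2^{(l)}$ versus $\sin^3t_s$ in $Q_1^{(l)}$; so your argument is fully consistent with the paper's internal usage. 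Properties (1)--(4) then follow as you say: (3) from Lemma~\ref{lm:T*P}, (4) as the case $n=0$ via $R_0(y)=\frac12(3y^2-1)+\frac32\Si y=1$ (matching the computation of $\T{1;y}{}{1,x}$ and $\T{2;y}{}{1,x}$ in Corollary~\ref{cr:T*fg}), and (2) by direct evaluation at $y=1$, where $\Si y=0$ and $R=x$. The one point you should make explicit in (5) is the hypothesis of Lemma~\ref{lm:T*fg}: applying it with $g=\Px k$ requires $\Px k(x)\T{j;y}{}{f,x}\in\Lmu$ $(j=1,2)$; this holds because (5) implicitly assumes $f\in\Lmu$ (otherwise $a_k(f)$ is not defined), Corollary~\ref{cr:T*fg} then gives $\T{j;y}{}{f,x}\in\Lmu$, and $\Px k$ is bounded on $[-1,1]$ --- a one-line verification, but it is what licenses the transfer of the operator onto $\Px k$. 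What your route buys is independence from the external citation: the lemma becomes a corollary of Lemmas~\ref{lm:T*P} and~\ref{lm:T*fg} together with the explicit splitting of $T_y$ into the two auxiliary operators.
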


Lemma~\ref{lm:properties-T}
was proved in~\cite{potapov:mat-99}.

\begin{cor}\label{cr:properties-T}
	If $P_n(x)$ is an algebraic polynomial
	of degree not greater than $n-1$,
	then for every natural number~$r$
	and any fixed $y_1,y_2,\allowbreak\dots,y_r$,
	the functions $\T{\arr y r}r{P_n,x}$
	are algebraic polynomials of~$x$
	of degree not greater than $n-1$.
\end{cor}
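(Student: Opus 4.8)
The plan is to reduce the order-$r$ statement to the action of the first-order operator~$T_y$ on the Jacobi basis and then to iterate. The key observation is that the polynomials $\brc{\Px\nu(x)}_{\nu=0}^\infty$ form a basis of the space of all algebraic polynomials, with $\Px\nu$ of exact degree~$\nu$. Hence any polynomial~$P_n$ of degree not greater than $n-1$ can be written as
\begin{displaymath}
	P_n(x)=\sum_{\nu=0}^{n-1}c_\nu\Px\nu(x)
\end{displaymath}
for suitable coefficients~$c_\nu$.

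First I would establish the case $r=1$. Using the linearity of~$T_y$ in~$f$ (property~(1) of Lemma~\ref{lm:properties-T}) together with property~(3), I get, for each fixed~$y$,
\begin{displaymath}
	\T y{}{P_n,x}=\sum_{\nu=0}^{n-1}c_\nu\T y{}{\Px\nu,x}
	=\sum_{\nu=0}^{n-1}c_\nu R_\nu(y)\Px\nu(x).
\end{displaymath}
Since $R_\nu(y)$ is merely a scalar once~$y$ is fixed and each~$\Px\nu$ has degree $\nu\le n-1$, the right-hand side is an algebraic polynomial in~$x$ of degree not greater than $n-1$. Thus the operator~$T_y$ maps the space of polynomials of degree $\le n-1$ into itself.

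Then I would argue by induction on~$r$. The base case $r=1$ is what was just shown. For the inductive step I would use the recursive definition
\begin{displaymath}
	\T{\arr y r}r{P_n,x}=\T{y_r}{}{\T{\arr y{r-1}}{r-1}{P_n,x},x}.
\end{displaymath}
By the induction hypothesis, for fixed $y_1,\dots,y_{r-1}$ the inner function $\T{\arr y{r-1}}{r-1}{P_n,x}$ is an algebraic polynomial in~$x$ of degree not greater than $n-1$. Applying the already-proved case $r=1$ to this polynomial (with~$y_r$ in the role of~$y$) shows that $\T{\arr y r}r{P_n,x}$ is again a polynomial in~$x$ of degree not greater than $n-1$, which completes the induction.

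There is no substantial obstacle here; the only point that needs care is the degree bookkeeping. One must invoke that property~(3) of Lemma~\ref{lm:properties-T} does not raise the degree (for fixed~$y$ the factor $R_\nu(y)$ is a constant), and that $\Px\nu$ has exact degree~$\nu$, so that expanding~$P_n$ in the Jacobi basis and applying~$T_y$ termwise keeps the degree within the bound $n-1$ at every step of the iteration.
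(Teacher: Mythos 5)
Your proof is correct and is essentially the argument the paper intends: the corollary is stated as an immediate consequence of Lemma~\ref{lm:properties-T}, namely expanding $P_n$ in the basis $\brc{\Px\nu(x)}_{\nu}$, applying linearity (property~(1)) and the eigenfunction-type identity (property~(3)) so that for fixed~$y$ each term is multiplied by the scalar $R_\nu(y)$, and then iterating via the recursive definition of $\T{\arr y r}r{P_n,x}$. Your degree bookkeeping and the induction on~$r$ match this intended reasoning exactly.
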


\begin{lmm}\label{lm:elementary}
	If $-1\le x\le1$, $-1\le z\le1$, $0\le t\le\pi$
	and $R=x\*\cos t-z\*\sqrt{1-x^2}\*\sin t$,
	then $-1\le R\le1$
	and
	\begin{gather*}
		\Si x\Si z\le\Si R,\\
		\Si y\Si z\le\Si R,\\
		\prn{x\sqrt{1-y^2}+yz\sqrt{1-x^2}}^2\le\Si R,\\
		1-x^2\le C\prn{1-R^2+t^2},\\
		1-x\le C\prn{1-R+t^2},\\
		1+x\le C\prn{1+R+t^2},
	\end{gather*}
	where $y=\cos t$
	and~$C$ is an absolute constant.
\end{lmm}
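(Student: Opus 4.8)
The plan is to reduce the first three displayed inequalities to a single algebraic identity, and to handle the three inequalities involving~$t$ by elementary estimates on $1-\cos t$ and $\sin t$ together with Young's inequality. First I would record the bound $-1\le R\le1$: writing $y=\cos t$ so that $\sqrt{1-y^2}=\sin t\ge0$, we have $R=xy-z\sqrt{1-x^2}\sqrt{1-y^2}$, and since $|z|\le1$ the Cauchy--Schwarz inequality gives
\begin{displaymath}
	|R|\le|x||y|+\sqrt{1-x^2}\sqrt{1-y^2}
	  \le\sqrt{x^2+\prn{1-x^2}}\sqrt{y^2+\prn{1-y^2}}=1.
\end{displaymath}

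The heart of the matter is the identity
\begin{displaymath}
	\Si R=\Si x\Si z+\prn{x\sqrt{1-y^2}+yz\sqrt{1-x^2}}^2,
\end{displaymath}
which one verifies by expanding both sides (the cross term $2xyz\sqrt{1-x^2}\sqrt{1-y^2}$ matches in either direction). Since each summand on the right is nonnegative, this single identity yields at once both $\Si x\Si z\le\Si R$ and $\prn{x\sqrt{1-y^2}+yz\sqrt{1-x^2}}^2\le\Si R$. Because $R$ is invariant under the interchange $x\leftrightarrow y$ (which swaps $\sqrt{1-x^2}\leftrightarrow\sqrt{1-y^2}$), the same identity with $x$ and $y$ exchanged reads $\Si R=\Si y\Si z+\prn{y\sqrt{1-x^2}+xz\sqrt{1-y^2}}^2$, giving the remaining inequality $\Si y\Si z\le\Si R$.

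For the inequalities involving~$t$ the idea is to split off the deviation of $\cos t$ from~$1$. Writing $1-R=(1-x)+x(1-\cos t)+z\sqrt{1-x^2}\sin t$ and using $1-\cos t\le t^2/2$, $\sin t\le t$, and $\sqrt{1-x^2}\le\sqrt{2(1-x)}$, I would estimate the cross term by Young's inequality as $\sqrt{1-x^2}\sin t\le\tfrac12(1-x)+t^2$; absorbing the $\tfrac12(1-x)$ into the left side yields $1-x\le C(1-R+t^2)$. The bound $1+x\le C(1+R+t^2)$ follows identically from $1+R=(1+x)-x(1-\cos t)-z\sqrt{1-x^2}\sin t$ and $\sqrt{1-x^2}\le\sqrt{2(1+x)}$. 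Finally, multiplying these two inequalities (all factors being nonnegative since $-1\le R\le1$) and using $(1-R+t^2)(1+R+t^2)=\Si R+2t^2+t^4$ with $t^4\le\pi^2t^2$ on $[0,\pi]$ gives $1-x^2\le C\Si R+Ct^2\le C'(1-R^2+t^2)$.

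The verification of the central identity is lengthy but purely mechanical; the one genuinely delicate point is this last absorption step, where the sign of~$z$ can make $1-R$ (respectively $1+R$) strictly smaller than $1-x$ (respectively $1+x$), so the cross term $z\sqrt{1-x^2}\sin t$ cannot simply be discarded and must instead be controlled against the left-hand side via Young's inequality.
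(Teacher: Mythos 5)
Your proof is correct. Note that this paper does not actually prove Lemma~\ref{lm:elementary} at all --- it simply states it and cites \cite{potapov:mat-99} and \cite{potapov:trudy-81} --- so there is no in-paper argument to compare yours against; what you have produced is a genuinely self-contained proof of a fact the authors outsource. Your central identity
$1-R^2=\prn{1-x^2}\prn{1-z^2}+\prn{x\sqrt{1-y^2}+yz\sqrt{1-x^2}}^2$
checks out by direct expansion (both sides reduce to $1-z^2+x^2z^2-x^2y^2+2xyz\sqrt{1-x^2}\sqrt{1-y^2}+y^2z^2-x^2y^2z^2$), and combined with the $x\leftrightarrow y$ symmetry of $R$ it delivers the first three inequalities simultaneously --- this is an efficient organization, since the three bounds are usually stated as separate estimates. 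The absorption argument is also sound: from $1-x=(1-R)-x(1-\cos t)-z\sqrt{1-x^2}\sin t$, the bounds $1-\cos t\le t^2/2$, $\sin t\le t$, $\sqrt{1-x^2}\le\sqrt{2(1-x)}$, and the weighted AM--GM estimate $\sqrt{2(1-x)}\,t\le\tfrac12(1-x)+t^2$ give $1-x\le2(1-R)+3t^2$, with the $1+x$ case identical; multiplying the two and using $(1-R+t^2)(1+R+t^2)=1-R^2+2t^2+t^4$ together with $t^4\le\pi^2t^2$ on $[0,\pi]$ gives the $1-x^2$ bound with an explicit absolute constant. You are also right to flag the sign issue: since $z$ can have either sign, the cross term $z\sqrt{1-x^2}\sin t$ can make $1-R$ smaller than $1-x$, so it genuinely must be traded against the left-hand side via Young's inequality rather than discarded; this is the one place where a careless argument would fail, and you handle it correctly.
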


Lemma~\ref{lm:elementary}
was proved in~\cite{potapov:mat-99}
and~\cite{potapov:trudy-81}.

\begin{lmm}\label{lm:inequality}
	Let~$p$, $\alpha$, $\beta$ and~$\gamma$
	be given numbers such that $\allp$,
	$\gamma=\min\{\alpha,\beta\}$,
	and
	\begin{alignat*}2
		\gamma &>1-\frac1{2p} &\quad &\text{for $1\le p<\infty$},\\
		\gamma &\ge1          &\quad &\text{for $p=\infty$}.
	\end{alignat*}
	Let~$\varepsilon$, $0<\varepsilon<\frac12$,
	be an arbitrary number.
	Define
	\begin{displaymath}
		\gamma_1=
		\begin{cases}
		\alpha-\beta, & \text{if $\alpha>\beta$}\\
		0,            & \text{if $\alpha\le\beta$},
		\end{cases}
		\quad
		\gamma_2=
		\begin{cases}
		0,            & \text{if $\alpha>\beta$}\\
		\beta-\alpha, & \text{if $\alpha\le\beta$},
		\end{cases}
	\end{displaymath}
	and for $1<p\le\infty$ let
	\begin{displaymath}
		\gamma_3=
		\begin{cases}
		\gamma-\frac32+\frac1{2p}+\varepsilon,
		  & \text{if $\gamma\ge\frac32-\frac1{2p}$}\\
		0,
		  & \text{if $\gamma<\frac32-\frac1{2p}$},
		\end{cases}
	\end{displaymath}
	while, for $p=1$,
	\begin{displaymath}
		\gamma_3=
		\begin{cases}
		\gamma-1, & \text{if $\gamma\ge1$}\\
		0,        & \text{if $\gamma<1$}.
		\end{cases}
	\end{displaymath}
	Let $R=x\cos t-z\sqrt{1-x^2}\sin t$.
	Then, for every measurable function~$f$ on $[-1,1]$
	the following inequality holds
	\begin{multline*}
		\norm{\frac1{1-x^2}
		  \int_{-1}^1\Si{R}|f(R)|\frac{dz}{\sqrt{1-z^2}}}\\
		\le C
		  \Big(
			\norm{f}
			+t^{2(\gamma_1+\gamma_2)}
			  \|f\|_{p,\alpha-\gamma_1,\beta-\gamma_2}
			  +t^{2\gamma_3}\|f\|_{p,\alpha-\gamma_3,\beta-\gamma_3}\\
			  +t^{2(\gamma_1+\gamma_2+\gamma_3)}
				\|f\|_{p,\alpha-\gamma_1-\gamma_3,\beta-\gamma_2-\gamma_3}
		  \Big),
	\end{multline*}
	where the constant~$C$ does not depend on~$f$ and~$t$.
\end{lmm}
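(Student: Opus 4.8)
The plan is to regard the quantity inside the norm as a positive integral operator applied to $|f|$ and to estimate its weighted $L_p$-norm by first transferring the weight from the variable $x$ to the variable $R$ by means of Lemma~\ref{lm:elementary}, and then performing the change of variables~\eqref{eq:x-z}. Note first that the hypothesis $\gamma>1-\tfrac1{2p}$ (resp.\ $\gamma\ge1$ for $p=\infty$) forces $\alpha,\beta\ge\gamma>0$, so every weight factor occurring below is raised to a nonnegative power.

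I would begin by bringing the weight $(1-x)^\alpha(1+x)^\beta$ inside the inner integral, where it is constant in $z$, and factoring it as $(1-x)^{\gamma_1}(1+x)^{\gamma_2}\Si x^\gamma$; this is legitimate because $\alpha-\gamma_1=\beta-\gamma_2=\gamma$. To the first factor I would apply the pointwise bounds $1-x\le C(1-R+t^2)$ and $1+x\le C(1+R+t^2)$ of Lemma~\ref{lm:elementary}, together with the elementary inequality $(a+b)^s\le C(a^s+b^s)$, splitting $(1-x)^{\gamma_1}(1+x)^{\gamma_2}$ into the two contributions $C(1-R)^{\gamma_1}(1+R)^{\gamma_2}$ and $C\,t^{2(\gamma_1+\gamma_2)}$ (recall that one of $\gamma_1,\gamma_2$ always vanishes). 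This is the first of two independent reductions: it produces the factor $t^{2(\gamma_1+\gamma_2)}$ and effects the passage from the weight $(\alpha,\beta)$ to the symmetric weight $(\gamma,\gamma)$, thereby separating the term pairs $\{1,3\}$ and $\{2,4\}$ on the right-hand side.

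With the symmetric part $\Si x^\gamma$ remaining, I would raise the inner integral to the power~$p$ by Hölder's inequality with respect to the finite measure $dz/\sqrt{1-z^2}$ (this step is vacuous for $p=1$ and becomes a supremum estimate for $p=\infty$), apply Fubini, and then carry out the change of variables~\eqref{eq:x-z}, under which $\Si x\,dz\,dx/\sqrt{1-z^2}=\Si R\,dV\,dR/\sqrt{1-V^2}$, exactly as in the proof of Lemma~\ref{lm:T*fg}. Collecting the surviving power $\Si x^{p(\gamma-1)}$ and integrating out $R$, the heart of the estimate reduces to the one-dimensional bound $\int_{-1}^1\Si x^{\,p(\gamma-1)-1}\,dV/\sqrt{1-V^2}\le C\,\Si R^{\,p(\gamma-1)-1}$, uniformly in $R$, where $x=x(R,V)$ is given by the first line of~\eqref{eq:x-z}. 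The second reduction, which yields the factor $t^{2\gamma_3}$ and the shift by $\gamma_3$ (and so separates $\{1,2\}$ from $\{3,4\}$), consists in trading $\gamma_3$ powers of $\Si x$ against $t^{2\gamma_3}$ by means of $\Si x\le C(\Si R+t^2)$ and $\Si x\Si z\le\Si R$, and is invoked precisely when the uniform bound just stated fails for the full weight.

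The hard part is exactly this uniform one-dimensional estimate, and its difficulty is the coincidence, near $V=\pm1$, of the singularity of the measure $(1-V^2)^{-1/2}$ with the near-vanishing of $\Si x$, which occurs when $1-R^2$ is of order $t^2$; balancing these two singularities is what pins down the admissible range of exponents. A direct computation shows that for $1\le p<\infty$ the bound holds with the full weight precisely in the subcritical range $1-\tfrac1{2p}<\gamma<\tfrac32-\tfrac1{2p}$, where $\gamma_3=0$ and only terms~1 and~2 survive. Once $\gamma\ge\tfrac32-\tfrac1{2p}$ the full weight is no longer integrable at $R=\pm1$ and one must lower it by $\gamma_3=\gamma-\tfrac32+\tfrac1{2p}+\varepsilon$; here both constraints on $\varepsilon$ are used, since $\varepsilon>0$ keeps the reduced exponent $\gamma-\gamma_3=\tfrac32-\tfrac1{2p}-\varepsilon$ strictly below the critical value (so the integral converges) while $\varepsilon<\tfrac12$ keeps it strictly above the threshold $1-\tfrac1{2p}$ (so the reduced weight is still admissible). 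For $p=1$ the critical exponent itself is admissible, the endpoint integral being convergent, which is why $\gamma_3$ is defined there without the $\varepsilon$-correction. Assembling the four products arising from the two independent reductions then yields the claimed inequality, with a constant $C$ depending only on $p$, $\alpha$, $\beta$ and $\varepsilon$ and not on $f$ or $t$.
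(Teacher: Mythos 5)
Your overall architecture (transfer the asymmetric part of the weight to $R$ via Lemma~\ref{lm:elementary} at the cost of $t^{2(\gamma_1+\gamma_2)}$, trade $\gamma_3$ powers at the cost of $t^{2\gamma_3}$, then pass to the variables~\eqref{eq:x-z}) does match the paper's strategy, but the central $L_p$ estimate --- what you yourself call the heart of the matter --- breaks down, and not in a repairable way. First, the identity you quote from Lemma~\ref{lm:T*fg} is not what the substitution gives: the invariant measure is $dz\,dx/\sqrt{1-z^2}=dV\,dR/\sqrt{1-V^2}$ (the spherical area element), with no factors $\Si x$, $\Si R$; in Lemma~\ref{lm:T*fg} the weight $\Si x$ turns into $\Si R$ only because the specific kernels there satisfy exact identities stemming from $\Si x\Si z=\Si R\Si V$ (the spherical law of sines). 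Second, and decisively, your crude H\"older step (all of $\Si z^{-1/2}$ put into the reference measure, the factor $\Si x^{p(\gamma-1)}$ left unpaired) forces you to need the uniform bound $\int_{-1}^1\Si x^{\,p(\gamma-1)-1}\,dV/\sqrt{1-V^2}\le C\Si R^{\,p(\gamma-1)-1}$, and this is false on most of the range where you claim it holds. Take $R=y=\cos t$, so that $x=\cos^2t+V\sin^2t$ and $1-x=(1-V)\sin^2t$: near $V=1$ the integrand is of order $\prn{(1-V)\sin^2t}^{s}(1-V)^{-1/2}$ with $s=p(\gamma-1)-1$, so the left-hand side is $+\infty$ whenever $s\le-\frac12$, i.e.\ whenever $\gamma\le1+\frac1{2p}$. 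In particular at $p=2$, $\alpha=\beta=\gamma=1$ --- a point admitted by the lemma and lying inside your claimed subcritical range $1-\frac1{2p}<\gamma<\frac32-\frac1{2p}$, where you set $\gamma_3=0$ and hence have no $t$-compensation available --- your pivotal inequality reads $\int_{-1}^1\Si x^{-1}\,dV/\sqrt{1-V^2}\le C\Si R^{-1}$, with an infinite left side. (For $s>0$ it fails too: as $R\to\pm1$ the left side tends to $\pi\Si y^{s}>0$ while the right side tends to $0$.) Every inequality in your chain is valid, but the chain terminates in $+\infty$, so nothing is proved precisely where the lemma must be sharpest; your characterization of the critical range, and with it your case analysis, is wrong.

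The idea you are missing is the paper's key device: H\"older is applied with a splitting $\Si z^{-1/2}=\Si z^{-\frac12-\frac1p+1-b}\cdot\Si z^{-\prn{1-\frac1p}+b}$, the parameter $b>0$ chosen case by case ($b=\frac32-\frac1{2p}-\gamma$ when $\gamma<\frac32-\frac1{2p}$, $b=\varepsilon$ otherwise), precisely so that the factor accompanying $|f(R)|^p$ is $\Si z^{-1/2}\prn{\Si z\Si x}^{p(\gamma-1)}$ times quantities depending only on $R$ and $t$: the weight $\Si x$ occurs only paired with an equal power of $\Si z$. Since the substitution~\eqref{eq:x-z} satisfies $\Si z\Si x=\Si V\Si R$ exactly, after the change of variables the integrand factors into a power of $\Si V$ times a function of $R$ alone; no pointwise bound of your type is ever needed, the resonance $R=y$ never enters, and the sole requirement is the convergence of $\int_{-1}^1\Si V^{\,p(\gamma-1)-\frac12}\,dV$, which holds exactly when $\gamma>1-\frac1{2p}$ --- the hypothesis of the lemma (in the supercritical case $b=\varepsilon$, the paired exponent becomes $-\frac12+p\prn{\frac12-\varepsilon}$, and $\varepsilon<\frac12$ is what keeps that $V$-integral convergent). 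This pairing, together with its $p=1$ and $p=\infty$ analogues (the paper's $\delta_k$ and $\lambda_k$ instead of $\varkappa_k$), is the actual content of the proof; without it, your two outer reductions, correct as they are in spirit, do not yield the lemma.
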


\begin{proof}
	If at least one of the terms on the right-hand side
	of the inequality is not finite,
	then the lemma is obvious.
	
	Suppose now that all the terms on the right-hand side
	of the inequality are finite.
	
	Let $\alpha\ge\beta$.
	We first consider the case $1\le p<\infty$.
	Clearly
	\begin{multline}\label{eq:I}
		I:=\norm{
			\frac1{1-x^2}\int_{-1}^1\Si{R}|f(R)|\frac{dz}{\sqrt{1-z^2}}
		  }^p\\
		=\int_{-1}^1
		  \left|\int_{-1}^1|f(R)|\Si{z}^{-1/2}\Si R\,dz\right|^p
			(1-x)^{p(\alpha-1)}(1+x)^{p(\beta-1)}\,dx.
	\end{multline}
	
	If $p=1$, then
	\begin{displaymath}
		I\le\int_{-1}^1\int_{-1}^1|f(R)|\delta\,dz\,dx,
	\end{displaymath}
	where
	\begin{displaymath}
		\delta=\Si{z}^{-1/2}\Si R\Si{x}^{\beta-1}(1-x)^{\alpha-\beta}.
	\end{displaymath}
	
	Let $\beta<1$.
	Then, in view of Lemma~\ref{lm:elementary},
	\begin{multline*}
		\delta=\Si{z}^{\frac12-\beta}
			\prn{\Si z\Si x}^{\beta-1}\Si R(1-x)^{\alpha-\beta}\\
		\le\Cn\Si{z}^{-1/2}\prn{\Si z\Si x}^{\beta-1}
			\Si R\prn{1-R+t^2}^{\alpha-\beta}\\
		=\lastC\delta_1(x,z,R).
	\end{multline*}
	
	Suppose that $\beta\ge1$.
	Making use of Lemma~\ref{lm:elementary} we see that
	\begin{multline*}
		\delta\le\Cn\Si{z}^{-1/2}\Si R
			\prn{1-R^2+t^2}^{\beta-1}\prn{1-R+t^2}^{\alpha-\beta}\\
		=\lastC\delta_2(x,z,R).
	\end{multline*}
	
	Incorporating these estimates for~$\delta$
	we get the inequality
	\begin{displaymath}
		I\le\Cn\int_{-1}^1\int_{-1}^1|f(R)|\delta_k(x,z,R)\,dz\,dx
		\quad(k=1,2),
	\end{displaymath}
	which, after the change of variables~\eqref{eq:x-z},
	takes the form
	\begin{displaymath}
		I\le\lastC\int_{-1}^1\int_{-1}^1|f(R)|\delta_k(R,V,R)\,dV\,dR
		\quad(k=1,2).
	\end{displaymath}
	
	Set
	\begin{multline*}
		\hat\delta_1(R)
		  :=\Si{R}^\beta\prn{1-R+t^2}^{\alpha-\beta}\\
		\le\Cn\prn{(1-R)^\alpha(1+R)^\beta
		  +t^{2(\alpha-\beta)}\Si{R}^\beta},
	\end{multline*}
	\begin{multline*}
		\hat\delta_2(R)
		  :=\Si R\prn{1-R^2+t^2}^{\beta-1}
			\prn{1-R+t^2}^{\alpha-\beta}\\
		\le\Cn\Big(
			  (1-R)^\alpha(1+R)^\beta
			  +t^{2(\alpha-\beta)}\Si{R}^\beta\\
		+t^{2(\beta-1)}(1-R)^{\alpha-\beta+1}(1+R)
			  +t^{2(\alpha-1)}\Si R
			\Big).
	\end{multline*}
	Then clearly
	\begin{displaymath}
		I\le\Cn\int_{-1}^1|f(R)|\hat\delta_k(R)\,dR
		\quad(k=1,2).
	\end{displaymath}
	
	The last inequality
	and the estimates for $\hat\delta_k(R)$,
	given above, yield
	\begin{multline*}
		I\le\Cn\big(
			\|f\|_{1,\alpha,\beta}
			+t^{2\gamma_1}\|f\|_{1,\alpha-\gamma_1,\beta}
			+t^{2\gamma_3}\|f\|_{1,\alpha-\gamma_3,\beta-\gamma_3}\\
		+t^{2(\gamma_1+\gamma_3)}
			  \|f\|_{1,\alpha-\gamma_1-\gamma_3,\beta-\gamma_3}
		  \big),
	\end{multline*}
	where the constant~$\lastC$ does not depend on~$f$ and~$t$.
	Hence the lemma is true in the case $p=1$.
	
	Assume now that $1<p<\infty$.
	Applying H\"older's inequality to the inside integral
	in~\eqref{eq:I}
	we get
	\begin{multline*}
		I=\int_{-1}^1
			\left|
			  \int_{-1}^1|f(R)|\Si{z}^{-\frac12-\frac1p+1-b}\Si R
					\Si{z}^{-\prn{1-\frac1p}+b}\,dz
				\right|^p\\
		\times(1-x)^{p(\alpha-1)}(1+x)^{p(\beta-1)}\,dx
		  \le\Cn\int_{-1}^1\int_{-1}^1|f(R)|^p\varkappa\,dz\,dx,
	\end{multline*}
	where
	\begin{displaymath}
		\varkappa
		=\Si{z}^{-1+p\prn{\frac12-b}}\Si{R}^p
		  \Si{x}^{p(\beta-1)}(1-x)^{p(\alpha-\beta)},
	\end{displaymath}
	$b$~is an arbitrary positive number,
	the constant~$\lastC$ does not depend on~$t$
	and the function~$f$.
	
	Let $\beta<\frac32-\frac1{2p}$.
	Put $b=\frac32-\frac1{2p}-\beta$.
	Applying Lemma~\ref{lm:elementary}
	we derive the estimate
	\begin{multline*}
		\varkappa
		  \le\Cn\Si{z}^{-1/2}\prn{\Si z\Si x}^{p(\beta-1)}
			\Si{R}^p\prn{1-R+t^2}^{p(\alpha-\beta)}\\
		=\lastC\varkappa_1(x,z,R).
	\end{multline*}
	
	Let $\beta\ge\frac32-\frac1{2p}$.
	Put $b=\varepsilon$,
	where~$\varepsilon$ is an arbitrary number
	belonging to the interval $0<\varepsilon<\frac12$.
	Again by Lemma~\ref{lm:elementary} we see that
	\begin{multline*}
		\varkappa
		  \le\Cn\Si{z}^{-1/2}
			\prn{\Si z\Si x}^{-\frac12+p\prn{\frac12-\varepsilon}}
			\Si{R}^p\\
		\times\prn{1-R^2+t^2}^{p\prn{\beta-\frac32+\frac1{2p}+\varepsilon}}
			\prn{1-R+t^2}^{p(\alpha-\beta)}
		  =\lastC\varkappa_2(x,z,R).
	\end{multline*}
	
	Using these estimates for~$\varkappa$
	we get the inequality
	\begin{displaymath}
		I\le\Cn\int_{-1}^1\int_{-1}^1|f(R)|^p
		  \varkappa_k(x,z,R)\,dz\,dx
		\quad(k=1,2).
	\end{displaymath}
	and consequently
	(after the changes of variables~\eqref{eq:x-z}),
	\begin{displaymath}
		I\le\lastC\int_{-1}^1\int_{-1}^1|f(R)|^p
		  \varkappa_k(R,V,R)\,dV\,dR
		\quad(k=1,2).
	\end{displaymath}
	
	Set
	\begin{multline*}
		\hat\varkappa_1(R)
		  :=\Si{R}^{p\beta}\prn{1-R+t^2}^{p(\alpha-\beta)}\\
		\le\Cn\prn{(1-R)^{p\alpha}(1+R)^{p\beta}
			+t^{2p(\alpha-\beta)}\Si{R}^{p\beta}},
	\end{multline*}
	\begin{multline*}
		\hat\varkappa_2(R)
		  :=\Si{R}^{-\frac12+p\prn{\frac32-\varepsilon}}
			  \prn{1-R^2+t^2}^{p\prn{\beta-\frac32+\frac1{2p}+\varepsilon}}
			\prn{1-R+t^2}^{p(\alpha-\beta)}\\
		\le\Cn
			\bigg(
			  (1-R)^{p\alpha}(1+R)^{p\beta}
			  +t^{2p(\alpha-\beta)}\Si{R}^{p\beta}\\
		+t^{2p\prn{\beta-\frac32+\frac1{2p}+\varepsilon}}
				(1-R)^{p\prn{\alpha-\beta+\frac32-\frac1{2p}-\varepsilon}}
				(1+R)^{p\prn{\frac32-\frac1{2p}-\varepsilon}}\\
		+t^{2p\prn{\alpha-\frac32+\frac1{2p}+\varepsilon}}
				\Si{R}^{p\prn{\frac32-\frac1{2p}-\varepsilon}}
			\bigg).
	\end{multline*}
	Then clearly
	\begin{displaymath}
		I\le\Cn\int_{-1}^1|f(R)|^p\hat\varkappa_k(R)\,dR
		\quad(k=1,2).
	\end{displaymath}
	From the last inequality
	and the estimates of $\hat\varkappa_k(R)$
	we obtain
	\begin{multline*}
		I\le\Cn
		  \Big(
			\norm{f}^p
			+t^{2p\gamma_1}\|f\|^p_{p,\alpha-\gamma_1,\beta}
			  +t^{2p\gamma_3}\|f\|^p_{p,\alpha-\gamma_3,\beta-\gamma_3}\\
		+t^{2p(\gamma_1+\gamma_3)}
			  \|f\|^p_{p,\alpha-\gamma_1-\gamma_3,\beta-\gamma_3}
		  \Big),
	\end{multline*}
	where the constant~$\lastC$ does not depend on~$f$ and~$t$.
	This shows that the lemma is true
	in the case $1<p<\infty$ as well.
	
	Now let $p=\infty$.
	Consider the integral
	\begin{multline*}
		J:=\int_{-1}^1|f(R)|\Si{z}^{-1/2}\Si R
		  (1-x)^{\alpha-1}(1+x)^{\beta-1}\,dz\\
		=\int_{-1}^1|f(R)|\lambda\,dz,
	\end{multline*}
	where
	\begin{displaymath}
		\lambda
		=\Si{z}^{-1+b}\Si R\Si{x}^{\beta-1}
		  (1-x)^{\alpha-\beta}\Si{z}^{\frac12-b}
	\end{displaymath}
	and $b$~is an arbitrary positive number.
	
	Let $\beta<\frac32$.
	Put $b=\frac32-\beta$.
	Applying the estimate from Lemma~\ref{lm:elementary}
	we get
	\begin{multline*}
		\lambda
		  =\Si{z}^{\frac12-\beta}\Si R\prn{\Si z\Si x}^{\beta-1}
			(1-x)^{\alpha-\beta}\\
		\le\Cn\Si{z}^{\frac12-\beta}\Si{R}^\beta
			\prn{1-R+t^2}^{\alpha-\beta}
		  =\lastC\Si{z}^{\frac12-\beta}\lambda_1(R).
	\end{multline*}
	
	Let $\beta\ge\frac32$.
	Put $b=\varepsilon$,
	where~$\varepsilon$ is an arbitrary number
	from the interval $0<\varepsilon<\frac12$.
	Applying again Lemma~\ref{lm:elementary}
	we see that
	\begin{multline*}
		\lambda
		  =\Si{z}^{-1+\varepsilon}\prn{\Si z\Si x}^{\frac12-\varepsilon}
			\Si R\Si{x}^{\beta-\frac32+\varepsilon}(1-x)^{\alpha-\beta}\\
		\le\Cn\Si{z}^{-1+\varepsilon}\Si{R}^{\frac32-\varepsilon}
			\prn{1-R^2+t^2}^{\beta-\frac32+\varepsilon}
			\prn{1-R+t^2}^{\alpha-\beta}\\
		=\lastC\Si{z}^{-1+\varepsilon}\lambda_2(R).
	\end{multline*}
	
	Using these estimates for~$\lambda$
	and taking into account the relations
	\begin{displaymath}
		\lambda_1(R)
		\le\Cn\prn{(1-R)^\alpha(1+R)^\beta
		  +t^{2(\alpha-\beta)}\Si{R}^\beta},
	\end{displaymath}
	\begin{multline*}
		\lambda_2(R)
		  \le\Cn
			\Big(
			  (1-R)^\alpha(1+R)^\beta
				+t^{2(\alpha-\beta)}\Si{R}^\beta\\
		+t^{2\prn{\beta-\frac32+\varepsilon}}
				(1-R)^{\alpha-\beta+\frac32-\varepsilon}
				(1+R)^{\frac32-\varepsilon}
			  +t^{2\prn{\alpha-\frac32+\varepsilon}}
				\Si{R}^{\frac32-\varepsilon}
			\Big),
	\end{multline*}
	for $k=1,2$
	we obtain
	\begin{multline*}
		J\le\Cn\max_{-1\le R\le1}|f(R)|\lambda_k(R)
		  \le\Cn
			\big(
			  \|f\|_{\infty,\alpha,\beta}
			  +t^{2\gamma_1}\|f\|_{\infty,\alpha-\gamma_1,\beta}\\
		+t^{2\gamma_3}\|f\|_{\infty,\alpha-\gamma_3,\beta-\gamma_3}
				+t^{2(\gamma_1+\gamma_3)}
				  \|f\|_{\infty,\alpha-\gamma_1-\gamma_3,\beta-\gamma_3}
			  \big),
	\end{multline*}
	where the constant~$\lastC$ does not depend on~$f$ and~$t$.
	This proves the lemma for $p=\infty$.
	
	Thus, the lemma is proved for $\alpha\ge\beta$.
	The case $\alpha\le\beta$ goes similarly.
	We omit the details.
	The proof is complete.
\end{proof}

\subsection{}

\begin{thm}\label{th:bound-T}
	Let~$p$, $\alpha$, $\beta$ and~$\gamma$ be given numbers
	such that $\allp$, $\gamma=\min\{\alpha,\beta\}$.
	Assume that
	\begin{alignat*}2
		\gamma &>1-\frac1{2p} &\quad &\text{for $1\le p<\infty$},\\
		\gamma &\ge1          &\quad &\text{for $p=\infty$}.
	\end{alignat*}
	Let~$\varepsilon$ be an arbitrary number
	belonging to the interval $0<\varepsilon<\frac12$.
	Let
	\begin{displaymath}
		\gamma_1=
		\begin{cases}
		\alpha-\beta, & \text{if $\alpha>\beta$}\\
		0,            & \text{if $\alpha\le\beta$},
		\end{cases}
		\quad
		\gamma_2=
		\begin{cases}
		0,            & \text{if $\alpha>\beta$}\\
		\beta-\alpha, & \text{if $\alpha\le\beta$}.
		\end{cases}
	\end{displaymath}
	Set for $1<p\le\infty$
	\begin{displaymath}
		\gamma_3=
		\begin{cases}
		\gamma-\frac32+\frac1{2p}+\varepsilon,
		  & \text{for $\gamma\ge\frac32-\frac1{2p}$}\\
		0,
		  & \text{for $\gamma<\frac32-\frac1{2p}$},
		\end{cases}
	\end{displaymath}
	and
	\begin{displaymath}
		\gamma_3=
		\begin{cases}
		\gamma-1, & \text{for $\gamma\ge1$}\\
		0,        & \text{for $\gamma<1$}
		\end{cases}
	\end{displaymath}
	for $p=1$.
	Then the following inequality holds true
	\begin{multline*}
		\norm{\hatT t{}{f,x}}
		  \le C\Big(
			  \norm{f}
				+t^{2(\gamma_1+\gamma_2)}
				  \normpar f{p,\alpha-\gamma_1,\beta-\gamma_2}\\
		+t^{2\gamma_3}\normpar f{p,\alpha-\gamma_3,\beta-\gamma_3}
				+t^{2(\gamma_1+\gamma_2+\gamma_3)}
				  \normpar f{p,\alpha-\gamma_1-\gamma_3,
					\beta-\gamma_2-\gamma_3}
			  \Big),
	\end{multline*}
	where the constant~$C$ does not depend on~$f$ and~$t$.
\end{thm}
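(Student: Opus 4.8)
The plan is to reduce the theorem directly to Lemma~\ref{lm:inequality}, whose right-hand side is \emph{verbatim} the four-term quantity asserted here. Recall that, after the substitution $y=\cos t$, $z=\cos\varphi$, the operator $\hatT t{}{f,x}$ coincides with the operator $\T y{}{f,x}$ defined above, so that
\begin{displaymath}
  \hatT t{}{f,x}=\frac1{\pi\Si x}\int_{-1}^1
    \prn{1-R^2-2\Si y\Si z+4\Si x\Si y\Si z^2}
    f(R)\frac{dz}{\sqrt{1-z^2}},
\end{displaymath}
where $R=xy-z\sqrt{1-x^2}\sqrt{1-y^2}=x\cos t-z\sqrt{1-x^2}\sin t$. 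Since the measure $\frac{dz}{\sqrt{1-z^2}}$ is even, the sign of $z$ is immaterial, and this $R$ is exactly the one occurring in Lemma~\ref{lm:inequality}. Everything therefore rests on dominating the kernel pointwise by a constant multiple of $\Si R$.

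The key step is the estimate $|K|\le C\Si R$ for the kernel $K=1-R^2-2\Si y\Si z+4\Si x\Si y\Si z^2$. By Lemma~\ref{lm:elementary} we have $-1\le R\le1$, whence $|1-R^2|=\Si R$ and $0\le\Si R\le1$; the same lemma supplies $\Si y\Si z\le\Si R$ and $\Si x\Si z\le\Si R$. The only term requiring a moment's thought is the quartic one, which I would handle by factoring
\begin{displaymath}
  4\Si x\Si y\Si z^2=4\prn{\Si x\Si z}\prn{\Si y\Si z}
    \le4\prn{\Si R}^2\le4\Si R,
\end{displaymath}
the last inequality using $\Si R\le1$. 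Collecting the three bounds yields $|K|\le\Si R+2\Si R+4\Si R=7\Si R$, and consequently
\begin{displaymath}
  \left|\hatT t{}{f,x}\right|
    \le\frac7\pi\cdot\frac1{\Si x}
      \int_{-1}^1\Si R\,|f(R)|\frac{dz}{\sqrt{1-z^2}}.
\end{displaymath}

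It then remains to take the monotone norm $\norm\cdot$ of both sides and invoke Lemma~\ref{lm:inequality}: the hypotheses on $p$, $\alpha$, $\beta$, $\gamma$ and the definitions of $\gamma_1,\gamma_2,\gamma_3$ stated in the theorem are exactly those of the lemma, so its conclusion furnishes precisely the asserted bound, with a constant independent of $f$ and $t$. I do not expect any genuine obstacle here: the whole analytic difficulty — the change of variables~\eqref{eq:x-z} and the splitting of the weights $\Si R$, $(1-R)$, $(1+R)$ into the four pieces carrying their respective powers of $t$ — has already been absorbed into Lemma~\ref{lm:inequality}. The single place deserving care is the kernel majorization above, and specifically not squandering a power while estimating the quartic term; using $\Si R\le1$ rather than attempting a sharper bound keeps the argument clean and is entirely sufficient.
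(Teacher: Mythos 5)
Your proposal is correct and follows essentially the same route as the paper: bound the kernel of $\hatT t{}{f,x}$ pointwise by $7\Si R$ using the inequalities of Lemma~\ref{lm:elementary} (including the same factorization of the quartic term giving $4\Si R^2\le4\Si R$), then apply Lemma~\ref{lm:inequality} to the resulting majorant. Your handling of the absolute value of the kernel is in fact slightly more careful than the paper's, which tacitly treats the kernel as nonnegative.
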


\begin{proof}
	We have
	\begin{displaymath}
		\norm{\hatT t{}{f,x}}
		\le\frac1\pi\norm{\frac1{1-x^2}
			\int_{-1}^1A|f(R)|\frac{dz}{\sqrt{1-z^2}}},
	\end{displaymath}
	where $R=x\cos t-z\sqrt{1-x^2}\sin t$,
	\begin{displaymath}
		A=1-R^2-2\Si x\sin^2t+4\Si x\Si{z}^2\sin^2t.
	\end{displaymath}
	Using Lemma~\ref{lm:elementary}
	we get
	\begin{displaymath}
		A\le1-R^2+2\Si R+4\Si{R}^2\le7\Si R.
	\end{displaymath}
	Hence
	\begin{displaymath}
		\norm{\hatT t{}{f,x}}
		\le\frac7\pi\norm{\frac1{1-x^2}
			\int_{-1}^1\Si{R}|f(R)|\frac{dz}{\sqrt{1-z^2}}}.
	\end{displaymath}
	Now the theorem follows from Lemma~\ref{lm:inequality}.
\end{proof}

\begin{thm}\label{th:T-Q}
	Let~$q$, $m$ and~$r$ be given natural numbers
	and let $f\in\Lmu$.
	The function
	\begin{multline*}
		Q(x)=\frac1{(\gamma_m)^r}
				\int_0^\pi\dots\int_0^\pi
				  \prn{\Dl{\arr t r}r{f,x}-(-1)^r f(x)}\\
		\times\prod_{s=1}^rA(t_s)\sin^3t_s\,dt_1\dots dt_r,
	\end{multline*}
	where
	\begin{displaymath}
		\gamma_m=\int_0^\pi A(t)\sin^3t\,dt,
	\end{displaymath}
	is an algebraic polynomial
	of degree not greater than $(q+2)\*(m-1)$.
\end{thm}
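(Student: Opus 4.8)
The plan is to turn the $r$-fold difference into a finite linear combination of compositions of the auxiliary operators $T_{1;y}$ and $T_{2;y}$, and then to recognise each resulting integral as a polynomial of the type produced by Lemmas~\ref{lm:T*-Q} and~\ref{lm:S-Q}. By property~(5) of Lemma~\ref{lm:properties-T} each operator $T_y$ multiplies the $k$-th Fourier--Jacobi coefficient by $R_k(y)$; hence the operators $\brc{T_{y_s}}$ commute, and since $\Dl t{}{f,x}=\T y{}{f,x}-f(x)$ with $y=\cos t$, the iterated difference is the operator product
\begin{displaymath}
	\Dl{\arr t r}r{f,x}=\prod_{s=1}^r\prn{T_{y_s}-I}f(x),
	\quad y_s=\cos t_s,
\end{displaymath}
$I$ denoting the identity. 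Expanding this product and splitting off the term corresponding to the empty set, which equals $(-1)^rf(x)$, gives
\begin{displaymath}
	\Dl{\arr t r}r{f,x}-(-1)^rf(x)
	  =\sum_{\emptyset\ne S\subseteq\{1,\dots,r\}}(-1)^{r-|S|}
		\prod_{s\in S}T_{y_s}f(x).
\end{displaymath}
Because $\prod_{s\in S}T_{y_s}f$ is independent of $t_s$ for $s\notin S$, integrating each such variable against $A(t_s)\sin^3t_s$ contributes a factor $\gamma_m$; thus, after dividing by $(\gamma_m)^r$, the term indexed by $S$ is weighted by $(\gamma_m)^{-|S|}$ and only the variables $t_s$, $s\in S$, remain under the integral.

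Next I would insert the identity
\begin{displaymath}
	\T y{}{f,x}=\T{1;y}{}{f,x}+\frac32\Si y\,\T{2;y}{}{f,x},
\end{displaymath}
obtained by comparing the integral definitions of $T_y$, $T_{1;y}$ and $T_{2;y}$ (it is consistent with $R_n(y)=\Py n(y)+\frac32\Si y\Px n(y)$ and Lemma~\ref{lm:T*P}). Distributing the product $\prod_{s\in S}T_{y_s}$ over this decomposition and recalling $\Si{y_s}=\sin^2t_s$, every summand becomes a composition $\prod_{s\in S_1}T_{1;y_s}\prod_{s\in S_2}T_{2;y_s}f$ over a partition $S=S_1\sqcup S_2$, multiplied by the scalar $\prn{\frac32}^{|S_2|}\prod_{s\in S_2}\sin^2t_s$. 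The essential observation is that this scalar upgrades the weight $A(t_s)\sin^3t_s$ to exactly $A(t_s)\sin^5t_s$ on each $T_2$-variable, matching the weight of Lemma~\ref{lm:S-Q}, while the $T_1$-variables keep the weight $A(t_s)\sin^3t_s$ of Lemma~\ref{lm:T*-Q}.

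It then suffices to prove that, for every partition $S=S_1\sqcup S_2$ with $S\ne\emptyset$, the integral
\begin{displaymath}
	W(x)=\int_0^\pi\dots\int_0^\pi
	  \prod_{s\in S_1}T_{1;y_s}\prod_{s\in S_2}T_{2;y_s}f(x)
	  \prod_{s\in S_1}A(t_s)\sin^3t_s
	  \prod_{s\in S_2}A(t_s)\sin^5t_s\,\prod_{s\in S}dt_s
\end{displaymath}
is an algebraic polynomial of degree at most $(q+2)(m-1)$. Since the operators commute and act diagonally on $\brc{\Px\nu}$ by Lemma~\ref{lm:T*P}, I would process the variables one by one exactly as in Lemmas~\ref{lm:T*-Q} and~\ref{lm:S-Q}: a $T_2$-variable is handled by writing $A(t_s)=\sum_k\beta_k\Px k(\cos t_s)$ and using self-adjointness (Lemma~\ref{lm:T*fg}) together with $\T{2;x}{}{\Px k,y}=\Px k(x)\Px k(y)$, whereas a $T_1$-variable is handled by writing $A(t_s)\sin^2t_s=\sum_k\alpha_kP_k^{(1,1)}(\cos t_s)$ and invoking Lemma~\ref{lm:T*P1}. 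Equivalently and more transparently, passing to the expansion $f\sim\sum_\nu c_\nu\Px\nu$ reduces $W$ to $\sum_\nu c_\nu u_\nu^{|S_1|}v_\nu^{|S_2|}\Px\nu(x)$ with
\begin{displaymath}
	u_\nu=\int_{-1}^1\Py\nu(y)A(y)\Si y\,dy,\quad
	v_\nu=\int_{-1}^1\Px\nu(y)A(y)\Si{y}^2\,dy,
\end{displaymath}
where $A(y)=\sum_{k=0}^{(q+2)(m-1)}b_ky^k$ is the polynomial of degree $(q+2)(m-1)$ representing $A(t)$ in $y=\cos t$. Orthogonality of the Legendre polynomials forces $u_\nu=0$ once $\nu>(q+2)(m-1)$ (the integrand pairs $\Py\nu$, of degree $\nu+2$, against a polynomial of degree $(q+2)(m-1)+2<\nu+2$), and orthogonality of $\brc{\Px\nu}$ with weight $\Si{y}^2$ forces $v_\nu=0$ once $\nu>(q+2)(m-1)$; as $S\ne\emptyset$, at least one factor $u_\nu$ or $v_\nu$ is present, so the series truncates at $\nu\le(q+2)(m-1)$ and $W$ is a polynomial of the required degree.

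Finally, summing the finitely many polynomials $W$ over all non-empty $S$ and all partitions $S=S_1\sqcup S_2$, with the scalar weights $(-1)^{r-|S|}(\gamma_m)^{-|S|}\prn{\frac32}^{|S_2|}$, exhibits $Q(x)$ as a finite linear combination of polynomials of degree at most $(q+2)(m-1)$, which is the assertion (the factor $1/(\gamma_m)^r$ is harmless since $\gamma_m=\int_0^\pi A(t)\sin^3t\,dt>0$). I expect the third step to be the main obstacle: Lemmas~\ref{lm:T*-Q} and~\ref{lm:S-Q} cover only the pure products $\prod T_{1;y_s}$ and $\prod T_{2;y_s}$, so the genuinely new point is the \emph{mixed} composition, namely checking that the $T_1$- and $T_2$-reductions are mutually compatible and that their two orthogonality ranges meet at the common bound $(q+2)(m-1)$. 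The weight matching established in the second step---$\sin^3t_s$ for the $T_1$-variables and $\sin^5t_s$ for the $T_2$-variables---is precisely what makes this possible.
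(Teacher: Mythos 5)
Your proposal is correct, and its skeleton is the same as the paper's: expand the $r$-fold difference multilinearly, split off $(-1)^rf(x)$, use the decomposition $\T y{}{f,x}=\T{1;y}{}{f,x}+\frac32\Si y\T{2;y}{}{f,x}$, and feed the result to Lemmas~\ref{lm:T*-Q} and~\ref{lm:S-Q}. The substantive difference lies exactly at the point you single out as the main obstacle. The paper reduces the theorem to the functions $Q^{(l)}$ built from the full operators $T$ and then asserts, as ``obvious'', that $Q^{(l)}(x)=\frac1{(\gamma_m)^r}\prn{Q_1^{(l)}(x)+\frac32Q_2^{(l)}(x)}$ with $Q_1^{(l)},Q_2^{(l)}$ the \emph{pure}-composition integrals of those two lemmas; for $l\ge2$ this identity is false, since expanding the $l$-fold composition of $T_{1;y_s}+\frac32\sin^2t_s\,T_{2;y_s}$ produces all $2^l$ mixed terms (in your notation, the left-hand side acts on the $\nu$-th Fourier--Jacobi component as $(u_\nu+\frac32v_\nu)^l/(\gamma_m)^l$, while the right-hand side has no cross terms). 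So the paper's written proof has a gap precisely where you dug in, and your handling of the mixed compositions is the needed repair. It does go through with the paper's own tools: for a mixed composition the outermost variable is treated by Lemma~\ref{lm:T*P1} (if it is a $T_1$-variable) or by the symmetry-plus-self-adjointness argument of Lemma~\ref{lm:S-Q} (if it is a $T_2$-variable), after which the inner operators of either type are peeled off by Lemmas~\ref{lm:T*fg} and~\ref{lm:T*P} together with the straightforward mixed extension of Corollary~\ref{cr:T*fg}; your weight bookkeeping---$\sin^3t_s$ on $T_1$-variables versus $\sin^5t_s$ on $T_2$-variables---is exactly what makes this mesh. Two refinements: state your ``more transparent'' orthogonality argument through coefficients and uniqueness (compute $a_\mu(W)$ and compare it with the coefficients of the candidate polynomial) rather than by expanding $f\sim\sum_\nu c_\nu\Px\nu$, since $f$ is merely integrable and that series need not converge; and the commutativity you invoke is dispensable, because the symmetry of the weight $\prod_sA(t_s)\sin^3t_s$ in $t_1,\dots,t_r$ lets you keep every composition in its original order (alternatively, the paper's shortcut can be salvaged by decomposing only the outermost operator and evaluating the inner ones via property~(5) of Lemma~\ref{lm:properties-T}).
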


\begin{proof}
	To prove the theorem it is sufficient to show that
	for every $l=1,\allowbreak\dots,r$
	the function
	\begin{displaymath}
		Q^{(l)}(x)
		=\frac1{(\gamma_m)^r}
		  \int_0^\pi\dots\int_0^\pi
			\T{\arr{\cos t}l}l{f,x}
			  \prod_{s=1}^rA(t_s)\sin^3t_s\,dt_1\dots dt_r
	\end{displaymath}
	is an algebraic polynomial
	of degree not greater than $(q+2)\*(m-1)$.
	
	It is obvious that the function $Q^{(l)}(x)$
	can be written in the form
	\begin{displaymath}
		Q^{(l)}(x)
		=\frac1{(\gamma_m)^r}\prn{Q_1^{(l)}(x)+\frac32Q_2^{(l)}(x)},
	\end{displaymath}
	where $Q_1^{(l)}(x)$ and $Q_2^{(l)}(x)$
	are the functions
	from Lemmas~\ref{lm:T*-Q} and~\ref{lm:S-Q}, respectively.
	But then it follows from Lemmas~\ref{lm:T*-Q}
	and~\ref{lm:S-Q}
	that $Q^{(l)}(x)$ is an algebraic polynomial
	of degree not greater than $(q+2)\*(m-1)$.
	
	The theorem is proved.
\end{proof}

\begin{thm}\label{th:HsubE}
	Let~$p$, $\alpha$, $\beta$, $r$ and~$\lambda$
	be given numbers such that $\allp$, $\lambda>0$, $r\in\numN$.
	Assume that
	\begin{alignat*}3
		\alpha &\le2, 		  &\quad \beta &\le2
		  &\quad &\text{for $p=1$},\\
		\alpha &<3-\frac1p, &\quad \beta &<3-\frac1p
		  &\quad &\text{for $1<p\le\infty$}.
	\end{alignat*}
	Let $f\in\Lp$ and
	\begin{displaymath}
		\w\le M\delta^\lambda.
	\end{displaymath}
	Then
	\begin{displaymath}
		\E\le CMn^{-\lambda},
	\end{displaymath}
	where the constant~$C$
	does not depend on~$f$, $M$ and~$n$ $(n\in\numN)$.
\end{thm}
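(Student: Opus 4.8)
The plan is to turn the polynomial furnished by Theorem~\ref{th:T-Q} into a near-best approximant and to control the approximation error through the moments of the Fejér-type kernel $A(t)$. First I would fix a natural number~$q$ with $2q>\lambda$; this choice is what makes the kernel integrals below converge. Before applying Theorem~\ref{th:T-Q} I must check $f\in\Lmu$, since that theorem presupposes integrability of~$f$ with weight $\Si{x}^2=(1-x)^2(1+x)^2$. This is exactly where the hypotheses on $\alpha,\beta$ are used: writing $(1-x)^2(1+x)^2=(1-x)^\alpha(1+x)^\beta(1-x)^{2-\alpha}(1+x)^{2-\beta}$, the extra factor is bounded when $p=1$ (because $\alpha,\beta\le2$), while for $1<p\le\infty$ an application of H\"older's inequality with conjugate exponent~$p'$ reduces the claim to $\int_{-1}^1(1-x)^{(2-\alpha)p'}(1+x)^{(2-\beta)p'}\,dx<\infty$, which holds precisely because $\alpha,\beta<3-\frac1p$. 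Thus $\Lp\subseteq\Lmu$, and Theorem~\ref{th:T-Q} produces, for each~$m$, a polynomial $Q=Q_m$ of degree at most $(q+2)(m-1)$.

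Next I would extract the error identity. Denote by $\langle g\rangle=(\gamma_m)^{-r}\int_0^\pi\cdots\int_0^\pi g\prod_{s=1}^rA(t_s)\sin^3t_s\,dt_1\cdots dt_r$ the normalised average over the $t_s$; since $\gamma_m=\int_0^\pi A(t)\sin^3t\,dt$ we have $\langle1\rangle=1$, so the definition of~$Q$ reads $\langle\Dl{\arr t r}r{f,x}\rangle=Q(x)+(-1)^rf(x)$. Consequently
\begin{displaymath}
	f(x)-(-1)^{r+1}Q(x)=(-1)^r\langle\Dl{\arr t r}r{f,x}\rangle .
\end{displaymath}
Because $A(t)\sin^3t\ge0$ on $[0,\pi]$, the average is a positive operation, and the generalised Minkowski inequality gives
\begin{displaymath}
	\norm{f-(-1)^{r+1}Q}\le\frac1{(\gamma_m)^r}
		\int_0^\pi\cdots\int_0^\pi\norm{\Dl{\arr t r}r{f,x}}
			\prod_{s=1}^rA(t_s)\sin^3t_s\,dt_1\cdots dt_r .
\end{displaymath}

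Now I would insert the smoothness hypothesis. For $t_1,\dots,t_r\in[0,\pi]$ one has $\norm{\Dl{\arr t r}r{f,x}}\le\wpar r{f,\max_i t_i}\le M(\max_i t_i)^\lambda\le M\sum_{i=1}^rt_i^\lambda$. Substituting this bound and using the symmetry of the integrand, each of the $r$ summands integrates out all but one variable against $(\gamma_m)^{r-1}$, so the right-hand side collapses to $\frac{Mr}{\gamma_m}\int_0^\pi t^\lambda A(t)\sin^3t\,dt$. It remains to estimate this single kernel moment.

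The heart of the matter is the estimate $\gamma_m^{-1}\int_0^\pi t^\lambda A(t)\sin^3t\,dt\le Cm^{-\lambda}$, and this is the step I expect to be the main obstacle. Using $\sin\frac t2\asymp t$ near the origin and the substitution $u=\frac{mt}2$, one finds $\gamma_m\asymp m^{2q}$ and $\int_0^\pi t^\lambda A(t)\sin^3t\,dt\asymp m^{2q-\lambda}$, where the latter integral converges at infinity exactly because $2q>\lambda$; this is precisely where the choice of~$q$ is needed and where the only genuine computation lies. Combining everything yields $\norm{f-(-1)^{r+1}Q_m}\le CMm^{-\lambda}$ with~$C$ depending only on $p,\alpha,\beta,r,\lambda,q$. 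Finally, given~$n$ I would take the largest~$m$ with $(q+2)(m-1)\le n-1$, so that $(-1)^{r+1}Q_m\in\mathcal P_n$ while $m\ge\frac n{2(q+2)}$ for $n\ge2$; hence $m^{-\lambda}\le Cn^{-\lambda}$ and $\E\le\norm{f-(-1)^{r+1}Q_m}\le CMn^{-\lambda}$, the finitely many small values of~$n$ being absorbed into the constant.
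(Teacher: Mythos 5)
Your proposal is correct and follows essentially the same route as the paper: verifying $\Lp\subseteq\Lmu$ by the same case analysis, invoking Theorem~\ref{th:T-Q} with $2q>\lambda$, bounding $\norm{f-(-1)^{r+1}Q}$ via the generalized Minkowski inequality and the modulus hypothesis, and finishing with the Jackson-kernel moment estimate and the degree-matching choice of~$m$. The only cosmetic differences are that you bound the difference by $\wpar r{f,\max_i t_i}$ where the paper uses $\wpar r{f,\sum_j t_j}$, and you write out the kernel computation that the paper dispatches as the ``standard evaluation of Jackson's kernel''; neither changes the argument.
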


\begin{proof}
	Under the conditions of the theorem,
	if $f\in\Lp$,
	then $f\in\Lmu$.
	Indeed, for $p=1$ we have
	\begin{displaymath}
		\normpar f{1,2,2}
		=\int_{-1}^1|f(x)|(1-x)^\alpha(1+x)^\beta
		  (1-x)^{2-\alpha}(1+x)^{2-\beta}\,dx
		\le\Cn\normpar f{1,\alpha,\beta}
	\end{displaymath}
	provided $\alpha\le2$ and $\beta\le2$.
	For $1<p<\infty$, by H\"older's inequality,
	\begin{multline*}
		\normpar f{1,2,2}
		  \le\brc{
			  \int_{-1}^1|f(x)|^p(1-x)^{p\alpha}(1+x)^{p\beta}\,dx
			}^{1/p}\\
		\times\brc{
			  \int_{-1}^1(1-x)^{(2-\alpha)\frac p{p-1}}
				(1+x)^{(2-\beta)\frac p{p-1}}\,dx
			}^{\frac{p-1}p}
		  =\Cn\norm f
	\end{multline*}
	for $\alpha<3-\frac1p$ and $\beta<3-\frac1p$.
	For $p=\infty$ we have
	\begin{displaymath}
		\normpar f{1,2,2}
		\le\normpar f{\infty,\alpha,\beta}
		  \int_{-1}^1(1-x)^{2-\alpha}(1+x)^{2-\beta}\,dx
		=\Cn\normpar f{\infty,\alpha,\beta}
	\end{displaymath}
	provided $\alpha<3$ and $\beta<3$.
	
	We choose a natural number~$q$ such that $2q>\lambda$,
	and for each $n\in\numN$ we choose a number $m\in\numN$
	satisfying the condition
	\begin{equation}\label{eq:m}
		\frac{n-1}{q+2}<m\le\frac{n-1}{q+2}+1.
	\end{equation}
	For these~$q$ and~$m$ the polynomial $Q(x)$
	defined in Theorem~\ref{th:T-Q}
	is from~$\mathcal P_n$.
	Hence
	\begin{multline*}
		\E\le\norm{f(x)-(-1)^{r+1}Q(x)}\\
		=\norm{\frac1{(\gamma_m)^r}
		  \int_0^\pi\dots\int_0^\pi\Dl{\arr t r}r{f,x}
			  \prod_{s=1}^rA(t_s)\sin^3t_s\,dt_1\dots dt_r}.
	\end{multline*}
	Applying the generalized inequality of Minkowski
	we obtain
	\begin{multline*}
		\E\le\frac1{(\gamma_m)^r}
		  \int_0^\pi\dots\int_0^\pi\norm{\Dl{\arr t r}r{f,x}}
				\prod_{s=1}^rA(t_s)\sin^3t_s\,dt_1\dots dt_r\\
		\le\frac1{(\gamma_m)^r}
		  \int_0^\pi\dots\int_0^\pi
			  \sup_{|u_i|\le\sum_{j=1}^r t_j}\norm{\Dl{\arr u r}r{f,x}}\\
		\times\prod_{s=1}^r A(t_s)\sin^3t_s\,dt_1\dots dt_r\\
		\le\frac1{(\gamma_m)^r}
		  \int_0^\pi\dots\int_0^\pi\hat\omega_r
			  \bigg(f,\sum_{j=1}^r t_j\bigg)_{p,\alpha,\beta}
			  \prod_{s=1}^rA(t_s)\sin^3t_s\,dt_1\dots dt_r.
	\end{multline*}
	Hence, taking into account the assumptions of the theorem,
	we have
	\begin{multline*}
		\E\le\frac M{(\gamma_m)^r}
		  \int_0^\pi\dots\int_0^\pi
			\bigg(\sum_{j=1}^r t_j\bigg)^\lambda
			  \prod_{s=1}^rA(t_s)\sin^3t_s\,dt_1\dots dt_r\\
		\le\Cn M\sum_{j=1}^r\frac1{(\gamma_m)^r}
		  \int_0^\pi\dots\int_0^\pi t_j^\lambda
			  \prod_{s=1}^rA(t_s)\sin^3t_s\,dt_1\dots dt_r.
	\end{multline*}
	Applying the standard evaluation of Jackson's kernel
	and making use of inequality~\eqref{eq:m}
	we obtain
	\begin{displaymath}
		\E\le\Cn Mm^{-\lambda}\le\Cn Mn^{-\lambda}.
	\end{displaymath}
	
	Theorem~\ref{th:HsubE} is proved.
\end{proof}

\begin{thm}\label{th:EsubH}
	Let~$p$, $\alpha$, $\beta$, $r$ and~$\lambda$
	be given numbers
	such that $\allp$, $r\in\numN$.
	Assume that
	\begin{alignat*}3
		\alpha &>1-\frac1{2p}, &\quad \beta &>1-\frac1{2p}
		  &\quad &\text{for $1\le p<\infty$},\\
		\alpha &\ge1,          &\quad \beta &\ge1
		  &\quad &\text{for $p=\infty$};
	\end{alignat*}
	\begin{displaymath}
		\lambda_0
		=2\max
			\brc{|\alpha-\beta|,\alpha-\frac32+\frac1{2p},
			  \beta-\frac32+\frac1{2p}}
		<\lambda<2r.
	\end{displaymath}
	If $f\in\Lp$ and
	\begin{displaymath}
		\E\le\frac M{n^\lambda},
	\end{displaymath}
	then
	\begin{displaymath}
		\w\le CM\delta^\lambda,
	\end{displaymath}
	where the constant~$C$
	does not depend on~$f$, $M$ and~$\delta$.
\end{thm}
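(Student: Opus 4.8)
The plan is to prove this converse (Bernstein–type) estimate by decomposing $f$ into a lacunary sum of near–best approximants and estimating two degree–regimes by different devices. Fix once and for all a number $\varepsilon\in\prn{0,\frac12}$ with $2\varepsilon<\lambda-\lambda_0$, and let $\gamma=\min\{\alpha,\beta\}$ and $\gamma_1,\gamma_2,\gamma_3$ be the exponents attached to this $\varepsilon$ as in Theorem~\ref{th:bound-T}. Using $\E\le Mn^{-\lambda}$, for each $k$ I would choose $P_{2^k}\in\mathcal P_{2^k}$ with $\norm{f-P_{2^k}}\le2M2^{-k\lambda}$ and set $Q_k=P_{2^k}-P_{2^{k-1}}$, so that $\deg Q_k<2^k$ and $\norm{Q_k}\le CM2^{-k\lambda}$, while $P_{2^k}\to f$ in $\Lp$. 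Since a single generalized difference annihilates constants (by properties (1),(4) of Lemma~\ref{lm:properties-T}, $\Dl t{}{c,x}=0$) and $\Dl{\arr t r}r{\cdot,x}$ is linear, termwise application gives $\Dl{\arr t r}r{f,x}=\sum_{k\ge1}\Dl{\arr t r}r{Q_k,x}$, the interchange being justified a posteriori by the summability obtained below. Fixing $\delta$ and choosing $N$ with $2^{-N}\le\delta<2^{-N+1}$, I would bound $\w$ by the supremum over $|t_i|\le\delta$ of $\sum_{k\le N}+\sum_{k>N}$.

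For the high–degree tail ($k>N$, so $\delta2^k\ge1$) the target is $\norm{\Dl{\arr t r}r{Q_k,x}}\le C(\delta2^k)^{2(\gamma_1+\gamma_2+\gamma_3)}\norm{Q_k}$, and the heart of the matter is that the endpoint loss must \emph{not} accumulate over the $r$ iterated differences. I would peel off one difference by Theorem~\ref{th:bound-T}, which controls $\norm{\hatT t{}{g,x}}$ by $\norm g$ and by norms of $g$ at the reduced weights $(\alpha-\gamma_1,\beta-\gamma_2)$, $(\alpha-\gamma_3,\beta-\gamma_3)$ and $(\alpha',\beta'):=(\alpha-\gamma_1-\gamma_3,\beta-\gamma_2-\gamma_3)$. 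The decisive observation is that at the deepest level the weights are \emph{symmetric}, $\alpha'=\beta'=\gamma-\gamma_3$, and the hypotheses $\gamma>1-\frac1{2p}$ and $\varepsilon<\frac12$ place this common value in the interval $\prn{1-\frac1{2p},\,\frac32-\frac1{2p}}$; hence, applying Theorem~\ref{th:bound-T} \emph{at the weight $(\alpha',\beta')$}, all three correction exponents vanish and $\hatT t{}{\cdot}$ is a \emph{contraction} there, uniformly in $t$. Thus the $r-1$ inner differences cost nothing in the $(\alpha',\beta')$–norm, and a single use of the Bernstein–Markov inequality (Lemma~\ref{lm:bernshtein-markov}) converts $\normpar{Q_k}{p,\alpha',\beta'}$ back to $\norm{Q_k}$ at the price of one factor $(2^k)^{2(\gamma_1+\gamma_2+\gamma_3)}$; the two intermediate terms, whose exponents are $\le2(\gamma_1+\gamma_2+\gamma_3)$, are handled by the same device. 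Since a direct computation gives $2(\gamma_1+\gamma_2+\gamma_3)\le\lambda_0+2\varepsilon<\lambda$, summing the geometric tail $\sum_{k>N}(\delta2^k)^{2(\gamma_1+\gamma_2+\gamma_3)}M2^{-k\lambda}$ yields $CM\delta^\lambda$.

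For the low–degree head ($k\le N$, so $\delta2^k\le1$) Theorem~\ref{th:bound-T} gives only $\norm{\Dl{\arr t r}r{Q_k,x}}\le C\norm{Q_k}$, which is too weak; here I would exploit the \emph{second–order} nature of one generalized difference. By properties (3) and (5) of Lemma~\ref{lm:properties-T} the Fourier–Jacobi multiplier of $\Dl t{}{\cdot}$ is $R_n(\cos t)-1$, which vanishes to order $t^2$ at $t=0$ with $R_n(\cos t)-1=O\prn{(nt)^2}$; accordingly, on a polynomial $U$ of degree $<2^k$ one controls $\Dl t{}{U,x}$ by $t^2$ times a second–order Jacobi differential expression in $U$, whose $\Lp$–norm is $\le C(2^k)^2\norm U$ after two applications of the derivative inequality of Lemma~\ref{lm:bernshtein-markov} (the factors $1-x^2$ absorbing the weight shift $+\frac12,+\frac12$ per derivative). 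Iterating this Jackson–Bernstein estimate $r$ times — each inner difference again a polynomial of degree $<2^k$ by Corollary~\ref{cr:properties-T} — gives $\norm{\Dl{\arr t r}r{Q_k,x}}\le C(\delta2^k)^{2r}\norm{Q_k}$, and since $2r>\lambda$ the head sum $\sum_{k\le N}(\delta2^k)^{2r}M2^{-k\lambda}$ is again $CM\delta^\lambda$.

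Adding the two contributions bounds $\sup_{|t_i|\le\delta}\norm{\Dl{\arr t r}r{f,x}}$, hence $\w$, by $CM\delta^\lambda$, which is the assertion; the case $p=\infty$ is analogous. The step I expect to be the main obstacle is the tail: arranging the induction on $r$ so that the weighted endpoint loss stays at $\lambda_0$ instead of multiplying up to $r\lambda_0$. This is precisely what the passage to the symmetric fully–reduced weight $(\alpha',\beta')$ achieves, and checking that this weight lands in the contraction range $\prn{1-\frac1{2p},\,\frac32-\frac1{2p}}$ — together with the bookkeeping of the two intermediate correction terms of Theorem~\ref{th:bound-T} — is the delicate part. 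The two-sided constraint $\lambda_0<\lambda<2r$ enters exactly here, forcing both the tail series ($2(\gamma_1+\gamma_2+\gamma_3)<\lambda$) and the head series ($\lambda<2r$) to converge.
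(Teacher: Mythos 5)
Your dyadic decomposition and your treatment of the low--degree pieces coincide in substance with the paper's: the estimate $\norm{\Dl{\arr t r}r{Q_k,x}}\le C(\delta 2^k)^{2r}\norm{Q_k}$ for $k\le N$ is exactly the paper's inequality for $I_k$, which it proves not by a multiplier heuristic but by an explicit integral representation of one generalized difference of a polynomial in terms of $\frac{d^2}{dx^2}$ and $\frac{d}{dx}$, followed by Lemma~\ref{lm:inequality} and two applications of Lemma~\ref{lm:bernshtein-markov}; your appeal to ``$R_n(\cos t)-1=O((nt)^2)$, accordingly\dots'' is a sketch of that nontrivial step, since a pointwise multiplier bound does not by itself yield an $\Lp$ operator bound. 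Your tail treatment, on the other hand, is genuinely different from the paper's: the paper never sums the pieces $Q_k$ with $k>N$, but keeps the remainder $f-P_{2^N}$ intact and estimates $A=\norm{\Dl{\arr t r}r{f-P_{2^N},x}}$ by induction on $r$, combining Theorem~\ref{th:bound-T} with Lemma~\ref{lm:rho-sigma}. Your alternative --- iterating Theorem~\ref{th:bound-T} at the symmetric fully reduced weight, where all correction exponents vanish, and paying one Bernstein--Markov factor at the end --- is a sound idea: the four weights involved do form a closed system under the reductions of Theorem~\ref{th:bound-T}, and indeed $2(\gamma_1+\gamma_2+\gamma_3)\le\lambda_0+2\varepsilon<\lambda$, so the geometric tail sums to $CM\delta^\lambda$.

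The genuine gap is the identity $\Dl{\arr t r}r{f,x}=\sum_{k}\Dl{\arr t r}r{Q_k,x}$, which you declare ``justified a posteriori by the summability obtained below.'' Summability of $\sum_k\norm{\Dl{\arr t r}r{Q_k,x}}$ only shows that the partial sums $\Dl{\arr t r}r{P_{2^M},x}$ converge in $\Lp$ to \emph{some} function; it does not identify that limit with $\Dl{\arr t r}r{f,x}$. The identification requires controlling $\Dl{\arr t r}r{f-P_{2^M},x}$, i.e.\ applying the generalized translation to the \emph{non-polynomial} remainder, and this is precisely where the difficulty of the theorem lives: $\hatT t{}{\cdot}$ is not a bounded operator on $\Lp$, and Theorem~\ref{th:bound-T} bounds $\norm{\hatT t{}{g,x}}$ only through norms of $g$ at the reduced weights $(\alpha-\gamma_1,\beta-\gamma_2)$, $(\alpha-\gamma_3,\beta-\gamma_3)$, $(\alpha-\gamma_1-\gamma_3,\beta-\gamma_2-\gamma_3)$. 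For $g=f-P_{2^M}$ these norms are not even known to be finite from $f\in\Lp$ alone, because lowering the exponents makes the weight larger near $\pm1$ and hence the norm stronger. The paper closes exactly this hole with Lemma~\ref{lm:rho-sigma}, which converts the hypothesis $\E\le Mn^{-\lambda}$ into finiteness \emph{and} decay of $\normpar{f-P_{2^N}}{p,\alpha-\gamma_1,\beta-\gamma_2}$ etc., at the cost of the factor $2^{N\lambda_0}$ --- this is the entire content of its estimate of the term $A$, and it is where $\lambda>\lambda_0$ is spent on the non-polynomial part. Your proposal never invokes Lemma~\ref{lm:rho-sigma} at all, so as written the limit interchange is an unproved assertion rather than a formality; any repair (either via Lemma~\ref{lm:rho-sigma} as in the paper, or via some argument that $g\mapsto\hatT t{}{g,x}$ is, for a.e.\ fixed $x$ and fixed $t$, a continuous functional on $\Lp$, propagated through all $r$ levels) must be supplied before the attractive tail argument can be credited.
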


\begin{proof}
	Let $P_n(x)$ be the polynomial from~$\mathcal P_n$
	for which
	\begin{displaymath}
		\norm{f-P_n}=\E \quad(n=1,2,\dotsc).
	\end{displaymath}
	We construct the polynomials $Q_k(x)$ by
	\begin{displaymath}
		Q_k(x)=P_{2^k}(x)-P_{2^{k-1}}(x)
		\quad(k=1,2,\dotsc)
	\end{displaymath}
	and $Q_0(x)=P_1(x)$.
	Since for $k\ge1$ we have
	\begin{multline*}
		\norm{Q_k}=\norm{P_2^k-P_{2^{k-1}}}
		  \le\norm{P_{2^k}-f}+\norm{f-P_{2^{k-1}}}\\
		=\Epar{2^k}f+\Epar{2^{k-1}}f,
	\end{multline*}
	then it follows from the assumptions of the theorem that
	\begin{displaymath}
		\norm{Q_k}\le\Cn M2^{-k\lambda}.
	\end{displaymath}
	
	It is obvious that without lost of generality
	we may assume that $t_s\ne0$ $(s=1,\allowbreak\dots,r)$.
	Next we estimate the quantity
	\begin{displaymath}
		I=\norm{\Dl{\arr t r}r{f,x}}
	\end{displaymath}
	for $0<|t_s|<\delta$ $(s=1,\allowbreak\dots,r)$.
	For every natural number~$N$,
	taking into account
	that the linearity of the operator $\hatT{t_1}{}{f,x}$
	implies the linearity of $\hatT{\arr t r}r{f,x}$,
	i.e.\ the linearity of the difference $\Dl{\arr t r}r{f,x}$,
	we have
	\begin{displaymath}
		I\le\norm{\Dl{\arr t r}r{f-P_{2^N},x}}
		  +\norm{\Dl{\arr t r}r{P_{2^N},x}}.
	\end{displaymath}
	Since $P_{2^N}(x)=\sum_{k=0}^N Q_k(x)$,
	we get
	\begin{displaymath}
		I\le\norm{\Dl{\arr t r}r{f-P_{2^N},x}}
		  +\sum_{k=0}^N\norm{\Dl{\arr t r}r{Q_k,x}}
		:=A+\sum_{k=1}^N I_k.
	\end{displaymath}
	
	Let~$N$ be chosen so that
	\begin{equation}\label{eq:delta}
		\frac\pi{2^N}<\delta\le\frac\pi{2^{N-1}}.
	\end{equation}
	We shall show that
	\begin{equation}\label{eq:J}
		A\le\Cn M\delta^\lambda
	\end{equation}
	and
	\begin{equation}\label{eq:Ik}
		I_k\le\Cn M\delta^{2r}2^{k(2r-\lambda)}.
	\end{equation}
	
	Consider first the quantity~$A$.
	Assume that $r=1$.
	An application of Theorem~\ref{th:bound-T}
	to the function $\varphi(x)=f(x)-P_{2^N}(x)$
	gives
	\begin{multline*}
		\norm{\Dl{t_1}{}{f-P_{2^N},x}}
		  =\norm{\hatT{t_1}{}{\varphi,x}-\varphi(x)}\\
		\le\norm{\hatT{t_1}{}{\varphi,x}}+\norm{\varphi(x)}
		  \le\Cn
			\Big(
			  \norm{\varphi}
				+\delta^{2(\gamma_1+\gamma_2)}
				  \normpar{\varphi}{p,\alpha-\gamma_1,\beta-\gamma_2}\\
		+\delta^{2\gamma_3}
				\normpar{\varphi}{p,\alpha-\gamma_3,\beta-\gamma_3}
				+\delta^{2(\gamma_1+\gamma_2+\gamma_3)}
					\normpar{\varphi}{p,\alpha-\gamma_1-\gamma_3,
					  \beta-\gamma_2-\gamma_3}
				\Big)
	\end{multline*}
	for $|t_1|\le\delta$,
	where the numbers~$\gamma_1$, $\gamma_2$ and~$\gamma_3$
	are chosen as in Theorem~\ref{th:bound-T}.
	Hence, by Lemma~\ref{lm:rho-sigma}
	\begin{multline*}
		\norm{\Dl{t_1}{}{f-P_{2^N},x}}
		  \le\Cn M
			\Big(
			  2^{-N\lambda}
				+\delta^{2(\gamma_1+\gamma_2)}
				  2^{-N(\lambda-2\gamma_1-2\gamma_2)}\\
		+\delta^{2\gamma_3}2^{-N(\lambda-2\gamma_3)}
			  +\delta^{2(\gamma_1+\gamma_2+\gamma_3)}
				2^{-N(\lambda-2\gamma_1-2\gamma_2-2\gamma_3)}
			\Big)
	\end{multline*}
	for $\lambda>\lambda_0+\varepsilon$,
	where the constant~$\lastC$
	does not depend on~$f$, $M$ and~$\delta$.
	Here~$\varepsilon$ is either equal to~$0$
	or is an arbitrary number
	belonging to the interval $0<\varepsilon<\frac12$.
	Therefore, this inequality holds for any $\lambda>\lambda_0$.
	Finally, applying inequality~\eqref{eq:delta} we get
	\begin{displaymath}
		\norm{\Dl{t_1}{}{f-P_{2^N},x}}
		\le\Cn M2^{N\lambda}\le\Cn M\delta^\lambda.
	\end{displaymath}
	Thus inequality~\eqref{eq:J} is proved for $r=1$.
	
	Suppose that
	\begin{displaymath}
		\norm{\Dl{\arr t{r-1}}{r-1}{f-P_{2^N},x}}
		\le\Cn M\delta^\lambda.
	\end{displaymath}
	Then inequality~\eqref{eq:delta} yields
	\begin{multline*}
		\norm{\Dl{\arr t{r-1}}{r-1}{f-P_{2^N},x}}\\
		=\norm{\Dl{\arr t{r-1}}{r-1}{f,x}
			  -\Dl{\arr t{r-1}}{r-1}{P_{2^N},x}}
			\le\Cn\frac M{2^{N\lambda}}.
	\end{multline*}
	Reasoning as above,
	i.e.\ applying first Theorem~\ref{th:bound-T}
	to the function
	\begin{displaymath}
		\Dl{\arr t{r-1}}{r-1}{f-P_{2^N},x},
	\end{displaymath}
	taking into account that by Corollary~\ref{cr:properties-T}
	$\Dl{\arr t{r-1}}{r-1}{P_{2^N},x}$ is an algebraic polynomial
	of degree not greater than $2^N-1$,
	applying Lemma~\ref{lm:rho-sigma},
	and finally inequality~\eqref{eq:delta},
	we obtain that
	\begin{displaymath}
		A=\norm{\Dl{\arr t r}r{f-P_{2^N},x}}
		\le\Cn\delta^\lambda.
	\end{displaymath}
	Inequality~\eqref{eq:J} is proved.
	
	Now we prove inequality~\eqref{eq:Ik}.
	Let
	\begin{displaymath}
		\psi_k(x)=\Dl{\arr t r}r{Q_k,x}.
	\end{displaymath}
	It can be shown that
	\begin{multline*}
		\psi_k(x)
		  =\frac1{2\pi\Si x}\int_0^{t_r}\int_{-u}^u\int_0^\pi
			\bigg(
				A(v)(R'_v)^2
				\frac{d^2}{dR_v^2}\Dl{\arr t{r-1}}{r-1}{Q_k,R_v}\\
		-\prn{A(v)R_v-2A'(v)R'_v}
			  \frac d{dR_v}\Dl{\arr t{r-1}}{r-1}{Q_k,R_v}\\
		+A''(v)\Dl{\arr t{r-1}}{r-1}{Q_k,R_v}
			\bigg)\,d\varphi\,dv\,du,
	\end{multline*}
	where $R_v=x\cos v-\sqrt{1-x^2}\cos\varphi\sin v$,
	\begin{displaymath}
		A(v)=1-R_v^2-2\sin^2v\sin^2\varphi+4\Si x\sin^2v\sin^4\varphi.
	\end{displaymath}
	
	Applying the estimates from Lemma~\ref{lm:elementary}
	and performing the change of variables $z=\cos\varphi$
	we obtain
	\begin{displaymath}
		|\psi_k(x)|
		\le\frac{\Cn}{1-x^2}\int_0^{t_r}\int_{-u}^u\int_{-1}^1
		  B(R_v)\frac{dz}{\sqrt{1-z^2}}\,dv\,du,
	\end{displaymath}
	where
	\begin{multline*}
		B(R_v)
		  =\Si{R_v}^2
			  \left|
				\frac{d^2}{dR_v^2}\Dl{\arr t{r-1}}{r-1}{Q_k,R_v}
			  \right|\\
		+\Si{R_v}
			  \left|\frac d{dR_v}\Dl{\arr t{r-1}}{r-1}{Q_k,R_v}\right|
			  +\left|\Dl{\arr t{r-1}}{r-1}{Q_k,R_v}\right|\\
		=B_1(R_v)+B_2(R_v)+B_3(R_v).
	\end{multline*}
	Therefore, using the generalized Minkowski inequality,
	we get
	\begin{multline}\label{eq:sumBmu}
		I_k=\norm{\psi_k(x)}
		  \le\lastC\int_0^{t_r}\int_{-u}^u\norm{\frac1{1-x^2}
				\int_{-1}^1B(R_v)\frac{dz}{\sqrt{1-z^2}}}\,dv\,du\\
		\le\Cn t_r^2\sup_{|v|\le t_r}
			  \norm{\frac1{1-x^2}
				\int_{-1}^1B(R_v)\frac{dz}{\sqrt{1-z^2}}}\\
		\le\lastC t_r^2\sum_{\mu=1}^3\sup_{|v|\le t_r}
				\norm{\frac1{1-x^2}
				  \int_{-1}^1B_\mu(R_v)\frac{dz}{\sqrt{1-z^2}}}.
	\end{multline}
	Next,
	applying first Lemma~\ref{lm:inequality}
	to the function $B_1(R_v)$,
	then Lemma~\ref{lm:bernshtein-markov},
	and finally inequality~\eqref{eq:delta},
	we get for $|v|\le|t_r|\le\delta$ and $k\le N$
	\begin{multline*}
		\norm{\frac1{1-x^2}
			  \int_{-1}^1B_1(R_v)\frac{dz}{\sqrt{1-z^2}}}\\
		\le\Cn
			\bigg(
			  \norm{\Si x\frac{d^2}{dx^2}\Dl{\arr t{r-1}}{r-1}{Q_k,x}}\\
		+|v|^{2(\gamma_1+\gamma_2)}
				\normpar{\Si x
				  \frac{d^2}{dx^2}\Dl{\arr t{r-1}}{r-1}{Q_k,x}}
					  {p,\alpha-\gamma_1,\beta-\gamma_2}\\
		+|v|^{2\gamma_3}
				\normpar{\Si x
				  \frac{d^2}{dx^2}\Dl{\arr t{r-1}}{r-1}{Q_k,x}}
					  {p,\alpha-\gamma_3,\beta-\gamma_3}\\
		+|v|^{2(\gamma_1+\gamma_2+\gamma_3)}
				\normpar{\Si x
				  \frac{d^2}{dx^2}\Dl{\arr t{r-1}}{r-1}{Q_k,x}}
				  {p,\alpha-\gamma_1-\gamma_3,\beta-\gamma_2-\gamma_3}
			\bigg)\\
		\le\Cn
			\big(
			  1
			  +|v|^{2(\gamma_1+\gamma_2)}2^{2k(\gamma_1+\gamma_2)}
			  +|v|^{2\gamma_3}2^{2k\gamma_3}
			  +|v|^{2(\gamma_1+\gamma_2+\gamma_3)}
				2^{2k(\gamma_1+\gamma_2+\gamma_3)}
			\big)\\
		\times\norm{\Si x
			  \frac{d^2}{dx^2}\Dl{\arr t{r-1}}{r-1}{Q_k,x}}\\
		\le\Cn
			\normpar{\frac{d^2}{dx^2}\Dl{\arr t{r-1}}{r-1}{Q_k,x}}
			  {p,\alpha+1,\beta+1}.
	\end{multline*}
	
	Similarly, applying first Lemma~\ref{lm:inequality},
	then Lemma~\ref{lm:bernshtein-markov},
	and finally inequality~\eqref{eq:delta}
	we obtain
	\begin{displaymath}
		\norm{\frac1{1-x^2}\int_{-1}^1B_2(R_v)\frac{dz}{\sqrt{1-z^2}}}
		\le\Cn\norm{\frac d{dx}\Dl{\arr t{r-1}}{r-1}{Q_k,x}}
	\end{displaymath}
	and
	\begin{displaymath}
		\norm{\frac1{1-x^2}\int_{-1}^1B_3(R_v)\frac{dz}{\sqrt{1-z^2}}}
		\le\Cn\normpar{\Dl{\arr t{r-1}}{r-1}{Q_k,x}}{p,\alpha-1,\beta-1}.
	\end{displaymath}
	
	Now,
	from inequality~\eqref{eq:sumBmu}
	and the fact that $|t_r|\le\delta$
	we derive the estimate
	\begin{multline*}
		I_k\le\Cn\delta^2
		  \bigg(
			  \normpar{\frac{d^2}{dx^2}\Dl{\arr t{r-1}}{r-1}{Q_k,x}}
				{p,\alpha+1,\beta+1}\\
			  +\norm{\frac d{dx}\Dl{\arr t{r-1}}{r-1}{Q_k,x}}
			  +\normpar{\Dl{\arr t{r-1}}{r-1}{Q_k,x}}
				{p,\alpha-1,\beta-1}
			\bigg).
	\end{multline*}
	Applying twice Lemma~\ref{lm:bernshtein-markov}
	we obtain the recurrence relation
	\begin{displaymath}
		I_k=\norm{\Dl{\arr t r}r{Q_k,x}}
		\le\lastC\delta^2 2^{2k}\norm{\Dl{\arr t{r-1}}{r-1}{Q_k,x}},
	\end{displaymath}
	which yields
	\begin{displaymath}
		I_k\le\Cn\delta^{2r}2^{2kr}\norm{Q_k}
		\le\Cn M\delta^{2r}2^{k(2r-\lambda)}.
	\end{displaymath}
	
	Inequality~\eqref{eq:Ik} is proved.
	
	Now combining~\eqref{eq:J}, \eqref{eq:Ik}
	and~\eqref{eq:delta}
	we finally get
	\begin{multline*}
		I\le\Cn M
			\prn{\delta^\lambda
			  +\delta^{2r}\sum_{k=1}^N 2^{k(2r-\lambda)}}
		  \le\Cn M\prn{\delta^\lambda+\delta^{2r}2^{N(2r-\lambda)}}\\
		\le\Cn M\delta^\lambda.
	\end{multline*}
	
	The proof of Theorem~\ref{th:EsubH} is completed.
\end{proof}

\bibliographystyle{amsplain}
\bibliography{maths}

\end{document}